\numberwithin{equation}{section}
\numberwithin{figure}{section}
\numberwithin{table}{section}
\def\ri{\text{ri\,}}
\def\R{\mathbb{R}}
\def\Sc{\mathbb{S}}
\def\Snp{\Sc_+^n}
\def\Snpp{\Sc_{++}^n}
\def\Rnp{\mathbb{R}_+^n}
\def\cK{\mathcal{K}}
\def\eqref#1{{\normalfont(\ref{#1})}}
\def\EDMp{\mbox{\boldmath$EDM$}}
\def\eqref#1{{\normalfont(\ref{#1})}}
\newtheorem{theorem}{Theorem}[section]
\newtheorem{definition}[theorem]{Definition}
\newtheorem{proposition}[theorem]{Proposition}
\newtheorem{corollary}[theorem]{Corollary}
\newtheorem{remark}[theorem]{Remark}
\newtheorem{lemma}[theorem]{Lemma}
\crefname{thm}{Theorem}{Theorems}
\Crefname{thm}{Theorem}{Theorems}
\crefname{assump}{Assumption}{Theorems}
\Crefname{assump}{Assumption}{Theorems}
\crefname{problem}{Problem}{Theorems}
\Crefname{problem}{Problem}{Theorems}
\crefname{conjecture}{Conjecture}{Theorems}
\Crefname{conjecture}{Conjecture}{Theorems}
\crefname{proposition}{Proposition}{Propositions}
\Crefname{proposition}{Proposition}{Propositions}
\crefname{prop}{Proposition}{Propositions}
\Crefname{prop}{Proposition}{Propositions}
\crefname{cor}{Corollary}{Corollaries}
\Crefname{cor}{Corollary}{Corollaries}
\crefname{lem}{Lemma}{Lemmas}
\Crefname{lem}{Lemma}{Lemmas}
\theoremstyle{definition}
\crefname{definition}{definition}{definitions}
\Crefname{definition}{Definition}{Definitions}
\crefname{defn}{definition}{definitions}
\Crefname{defn}{Definition}{Definitions}
\crefname{remark}{Remark}{Remarks}
\Crefname{remark}{Remark}{Remarks}
\crefname{rmk}{Remark}{Remarks}
\Crefname{rmk}{Remark}{Remarks}
\crefname{example}{Example}{Examples}
\Crefname{example}{Example}{Examples}
\crefname{align}{}{}
\Crefname{align}{}{}
\crefname{equation}{}{}
\Crefname{equation}{}{}
\newcommand*\bigcdot{\mathpalette\bigcdot@{.5}}
\newcommand*\bigcdot@[2]{\mathbin{\vcenter{\hbox{\scalebox{#2}{$\m@th#1\bullet$}}}}}
\newcommand{\textdef}[1]{\textit{#1}\index{#1}}
\newcommand{\Ss}{\mathcal{S}}
\newcommand{\<}{\langle}
\renewcommand{\>}{{\rangle}}
\newcommand{\bE}{{\mathbb E} }
\newcommand{\bF}{{\mathbb F} }
\newcommand{\cL}{{\mathcal L} }
\newcommand{\cA}{{\mathcal A} }
\newcommand{\cM}{{\mathcal M} }
\newcommand{\cF}{{\mathcal F} }
\newcommand{\E}{{\mathbb E} }
\newcommand{\cN}{{\mathcal N} }
\newcommand{\cE}{{\mathcal E} }
\newcommand{\cEn}{\cE^n}
\newcommand{\DNN}{\textbf{DNN}\,}
\newcommand{\DNNp}{\textbf{DNN}}
\newcommand{\SDPp}{\textbf{SDP}}
\newcommand{\bbm}{\begin{bmatrix}}
\newcommand{\ebm}{\end{bmatrix}}
\newcommand{\bem}{\begin{pmatrix}}
\newcommand{\eem}{\end{pmatrix}}
\newcommand{\beq}{\begin{equation}}
\newcommand{\beqs}{\begin{equation*}}
\newcommand{\bet}{\begin{table}}
\newcommand{\eeq}{\end{equation}}
\newcommand{\eeqs}{\end{equation*}}
\newcommand{\beqr}{\begin{eqnarray}}
\DeclareMathOperator{\face}{face}
\DeclareMathOperator{\sd}{sd}
\DeclareMathOperator{\Null}{null}
\DeclareMathOperator{\range}{range}
\DeclareMathOperator{\trace}{{trace}}
\DeclareMathOperator{\diag}{{diag}}
\DeclareMathOperator{\Diag}{{Diag}}
\DeclareMathOperator{\offDiag}{{offDiag}}
\DeclareMathOperator{\relint}{{relint}}
\DeclareMathOperator{\rank}{{rank}}
\DeclareMathOperator{\conv}{{conv}}
\newcommand{\nc}{\newcommand}
\nc{\arrow}{{\rm arrow\,}}
\nc{\Arrow}{{\rm Arrow\,}}
\nc{\BoDiag}{{\rm B^0Diag\,}}
\nc{\bodiag}{{\rm b^0diag\,}}
\nc{\Mm}{{\mathcal M}^{m} }
\nc{\Mmn}{{\mathcal M}^{mn} }
\nc{\Mnr}{{\mathcal M}_{nr} }
\nc{\Mnmr}{{\mathcal M}_{(n-1)r} }
\nc{\kwqqp}{Q{$^2$}P\,}
\nc{\kwqqps}{Q{$^2$}Ps}
\nc{\notinaho}{(X,S)\in \overline{AHO}(\A)}
\nc{\inaho}{(X,S)\in AHO(\A)}
\newcommand{\bea}{\begin{eqnarray}}%
\newcommand{\eea}{\end{eqnarray}}%
\newcommand{\beas}{\begin{eqnarray*}}%
\newcommand{\eeas}{\end{eqnarray*}}%
\newcommand{\Int}{{\rm int\,}}
\newcommand{\cone}{{\rm cone\, }}
\newcommand{\Hnp}[1][]{\,\mathbb{H}_+^{\ifthenelse{\equal{#1}{}}{n}{#1}}}
\newcommand{\Hkp}[1][]{\,\mathbb{H}_+^{\ifthenelse{\equal{#1}{}}{k}{#1}}}
\newcommand{\Hnpp}[1][]{\,\mathbb{H}_{++}^{\ifthenelse{\equal{#1}{}}{n}{#1}}}
\newcommand{\Hn}[1][]{\,\mathbb{H}^{\ifthenelse{\equal{#1}{}}{n}{#1}}}
\newcommand{\Hk}[1][]{\,\mathbb{H}^{\ifthenelse{\equal{#1}{}}{k}{#1}}}
\newcommand{\Dn}[1][]{\,\mathbb{D}^{\ifthenelse{\equal{#1}{}}{n}{#1}}}
\title{Singularity degree of non-facially exposed faces}
\author{Fei Wang\thanks{Department of Combinatorics and Optimization, Faculty of Mathematics, University of Waterloo, Waterloo, Ontario, Canada N2L 3G1}
 \and \href{http://www.math.uwaterloo.ca/~hwolkowi/} {Henry Wolkowicz}%
    \thanks{Department of Combinatorics and Optimization, Faculty of Mathematics, University of Waterloo, Waterloo, Ontario, Canada N2L 3G1; Research supported by The Natural Sciences and Engineering Research Council of Canada.}
}
\begin{document}

\maketitle

\begin{abstract}
In this paper, we study the facial structure of the linear image of a convex cone. 
We define the singularity degree of a face of a convex cone to be the minimum 
number of steps it takes to expose this face using exposing vectors from the 
dual cone. We show that the singularity degree of the linear image of a 
convex cone is exactly the number of facial reduction steps it takes to obtain the 
minimal face containing the feasible set in the corresponding linear conic optimization problem. Our 
result generalizes the relationship between the complexity of general 
facial reduction algorithms and facial exposedness of conic images 
under a linear transform by Drusvyatskiy, Pataki and Wolkowicz to arbitrary singularity degree. We present our results in the original form and also in its nullspace form. As a major application, we derive an upper bound for the singularity degree of generic frameworks or tensegrities in 2 or 3 dimensional space underlying certain graphical structure. 
\end{abstract}

{\bf Keywords:}
Singularity degree, exposed face, convex cones, semidefinite programming

{\bf AMS subject classifications: 90C22, 90C46, 52C25}

\tableofcontents
\listofalgorithms   
\addtocontents{loa}{\def\string\figurename{Algorithm}}
\listoftables
\listoffigures

\section{Introduction}
We consider the following linear \textdef{conic optimization problem} 
in its primal form,
\begin{equation}\label{prob:main}
  (P) \quad \begin{array}{cc}
        \max  & \langle C, X \rangle  \\
          \mbox{s.t. } &  \mathcal{M}(X) = b\in \bF \\
          & X \in \mathcal{K}\subseteq \bE,
    \end{array}
\end{equation}
where 
$\mathcal{M}:\bE\to \bF$ is a surjective linear transformation between
finite dimensional Euclidean spaces, $\<\cdot,\cdot\>$ denotes the
respective inner product, and $\mathcal{K}\subset \bE$ is a convex cone.


\index{$\cF$, feasible region}
The \textdef{feasible region, $\cF$}, of this conic 
linear optimization problem is denoted
\begin{equation}
\label{eq:feasset}
\cF = \{ X \in \cK : \cM (X) = b\}.
\end{equation}
Understanding the structure of the image of the convex cone $\cK$ under a 
linear transformation is an important problem in optimization. 
Since $\cM$ is surjective and hence an \emph{open mapping}, strict
feasibility (also called Slater's constraint qualification, 
$\exists \hat X\in \Int K, \cM(\hat X)=b$) requires that $b\in \Int
\cM(K)$.
In addition, the linear image of a convex cone can have very different
structure from the convex cone itself. The linear image of a closed
convex cone is not necessarily closed, see~\cite{Pataki:07} for
characterizations. The convex cone $\cK$ can be
\textdef{facially exposed}, i.e., any 
face $f$ of $\cK$ can be written as $f =
\cK \cap v^{\perp}\unlhd \cK$ with $v$ in the 
\textdef{dual cone, $\cK^*$}, of $\cK$, while 
simultaneously the linear image of $\cK$ is \emph{not} facially exposed.

\index{$\cK^*$, dual cone}





The smallest number of steps of facial reduction required to obtain the
minimum face containing the feasible region is called the \emph{singularity degree} of the system
\eqref{eq:feasset} over $\cK$. 
The singularity degree is an intriguing measurement of complexity for
$\cF$, see
e.g., the case when $\cK$ is the cone of positive semidefinite matrices~\cite{sturm2000error,sremac2021error}.  
Motivated by these results, in this paper we aim to generalize the
notion of singularity degree to the
case of the linear image of $\cK$ and in particular, for the
case of not facially-exposed cones. We include results on extending
the coordinate shadow theorem \cite{DrPaWo:14} of singularity degree one
to higher singularity degree.  Therefore
we obtain an alternative characterization of the singularity degree of a
linear optimization problem. We are also motivated by
the work in ~\cite{connelly2015iterative}  on
the universal rigidity of frameworks where the connections between
facial reduction and stress matrices are discussed.

For a set $S$, we let $\face(S,\cK)$ denote the smallest face of $\cK$
containing the set $S$.
We note the following related result, reworded in our notation.
\begin{theorem}{\cite[Theorem 4.1]{DrPaWo:14}} Let $\cK$ be closed. Then
for any vector $v$ exposing $\face(b,\cM(\cK))$, the vector $\cM^*v$
exposes $\face(\cF,\cK)$.
\end{theorem}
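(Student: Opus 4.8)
The plan is to pin down the exposed face $F := \cK \cap (\cM^*v)^\perp$ explicitly and prove that it coincides with $\face(\cF,\cK)$. First I would confirm that $\cM^*v$ is a legitimate exposing vector. Since $v$ exposes a face of $\cM(\cK)$ we have $v \in \cM(\cK)^*$, so for every $X \in \cK$ the adjoint identity gives $\langle \cM^*v, X\rangle = \langle v, \cM(X)\rangle \ge 0$; hence $\cM^*v \in \cK^*$ and $F$ is a genuine (exposed) face of $\cK$. Unwinding the definition yields the convenient description
\[
F = \{X \in \cK : \langle v, \cM(X)\rangle = 0\}.
\]

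The second step is the easy inclusion $\face(\cF,\cK) \subseteq F$. Because $v$ exposes $\face(b,\cM(\cK))$ and $b$ lies in that face, we have $\langle v, b\rangle = 0$; thus every $X \in \cF$ satisfies $\langle v, \cM(X)\rangle = \langle v, b\rangle = 0$, giving $\cF \subseteq F$. As $F$ is a face, minimality of $\face(\cF,\cK)$ forces $\face(\cF,\cK)\subseteq F$.

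The third step, the reverse inclusion, is the crux. The key observation is that $\cM$ carries $F$ onto $G := \face(b,\cM(\cK)) = \cM(\cK)\cap v^\perp$: the inclusion $\cM(F)\subseteq G$ is immediate, and conversely any $y\in G$ equals $\cM(X)$ for some $X\in\cK$ with $\langle v,\cM(X)\rangle=0$, so $X\in F$, whence $\cM(F)=G$. Now I would invoke two standard convex-analytic facts: (i) the minimal face $\face(b,\cM(\cK))$ contains $b$ in its relative interior, so $b \in \relint G$; and (ii) a linear map commutes with relative interior, $\cM(\relint F) = \relint(\cM(F)) = \relint G$. Together these produce a point $\hat X \in \relint F$ with $\cM(\hat X) = b$, i.e. $\hat X \in \cF$ while $\hat X \in \relint F$. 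Since the smallest face of $\cK$ containing a relative-interior point of $F$ is $F$ itself, we obtain $F = \face(\hat X, \cK) \subseteq \face(\cF,\cK)$, closing the argument.

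The main obstacle is exactly this reverse inclusion, and more precisely the need to produce a feasible point in the relative interior of $F$; this is where the relative-interior/linear-image identity $\cM(\relint F) = \relint(\cM(F))$ does the heavy lifting. I would take care to note that both convex-analytic facts hold for the possibly non-closed set $\cM(\cK)$ (they are valid for arbitrary convex sets), so that the closedness hypothesis on $\cK$ is used only to guarantee the ambient structure rather than secretly driving this particular argument.
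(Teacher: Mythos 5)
Your argument is correct. Note that the paper itself states this result as a quotation from Drusvyatskiy--Pataki--Wolkowicz and does not reprove it; the closest in-paper argument is the $r=1$ case of Theorem~\ref{thm:singledeg} (items 1 and 3), which reaches the same conclusion by a different route: there, minimality of $E=\cK\cap(\cM^*v)^{\perp}$ is proved by contradiction via the theorem of the alternative, using Lemma~\ref{lem:exp} and Lemma~\ref{lem:expose} to transport a hypothetical further exposing vector for the pulled-back face into a further exposing vector for $\face(b,\cM(\cK))$, contradicting its minimality. You instead argue directly: after establishing $\cM(F)=\cM(\cK)\cap v^{\perp}$ (which is precisely item 1 of Lemma~\ref{lem:expose}), you combine $b\in\relint\face(b,\cM(\cK))$ with the identity $\cM(\relint F)=\relint\cM(F)$ to produce a feasible point in $\relint F$, which immediately yields $F\subseteq\face(\cF,\cK)$. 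Your route is more self-contained --- it avoids the alternative system and the separation argument behind Lemma~\ref{lem:exp} --- and it makes explicit that closedness of $\cK$ plays no role, since both relative-interior facts hold for arbitrary convex sets (a point worth making, as $\cM(\cK)$ need not be closed even when $\cK$ is). The paper's contradiction-based argument, by contrast, is the one that extends naturally to the multi-step setting of Theorem~\ref{thm:singledeg}, where after the first reduction one can no longer expect to exhibit a single relative-interior feasible point in one shot.
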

As mentioned in \cite{DrPaWo:14}, ``there is difficulty when working with
$\cM(K)$ as it is not
usually a well-understood set; and systematically recognizing points on 
its boundary is hopeless and exposing vectors are out of reach.''
This is the motivation for our study  in this paper.


\section{Background: facial structure of a convex cone}
We begin with notation and then present some preliminary results.

\subsection{Notation}\label{subsec:notation}
Consider two Euclidean spaces $\bE$ and $\bF$ with inner products
$\langle \cdot , \cdot  \rangle_{\bE}$ and $\langle \cdot , \cdot
\rangle_{\bF}$, respectively. We use $\langle \cdot, \cdot\rangle$ when
the meaning is clear.
Define $\cM: \bE \rightarrow \bF$ to be a linear surjective mapping
between the Euclidean spaces $\bE$ and $\bF$. The \textdef{adjoint of
$\cM$, $\cM^*$}, is defines as the unique linear mapping from $\bF$ to $\bE$
such that $\langle \cM(x), y \rangle_{\bF} = \langle x, \cM^*(y)
\rangle_{\bE}, \forall x \in \bE, \forall y\in \bF$. Consider a convex
cone $\cK \subset \bE$. We denote $\Int(\cK),\ri(\cK)$ to be the 
interior and relative interior
of $\cK$, respectively; and we denote the orthogonal complement of $\cK$ as
$\cK^{\perp}$. For a vector $v$, we let $v^{\perp}:= \{ v \}^{\perp}$.
We associate to $\cK$ its \textdef{dual cone}
\[
\cK^* = \{ y \in \bE: \langle y, x \rangle \geq 0,\,  \forall  x \in \cK \}.
\]

\index{face, $f\unlhd K$}
\index{$f\unlhd K$, face}
A convex subset $f \subseteq \cK$ is called a \emph{face} of $\cK$, 
denoted $f \unlhd \cK$, if 
\[
x, y \in \cK, \,
\frac{1}{2} (x + y) \in f \implies x \in f, y \in f.
\]
In other words, a face $f$ contains all line segments in $\cK$ whose relative
interior intersects $f$. The \emph{minimal face} containing a set
$S \in \cK$, denoted \emph{$\face(S, C)$}, is the intersection of all
faces of $\cK$ containing $S$. A face $f$ of $\cK$ is an
\textdef{exposed  face} when there exists a vector $v \in \cK^*$ such
that $\cF = \cK \cap v^{\perp}$. In this case, we say $v$ exposes $\cF$.
The cone $\cK$ is called \textdef{facially exposed} if all faces of
$\cK$ are exposed. Examples of facial exposed cones includes: the
nonnegative orthant $\Rnp$; 
the cone of positive semidefinite matrices $\Snp$; and the
cone of Euclidean distance matrices $\cEn$. The linear image of the
(non-polyhedral) cones $\Snp,\cEn$ are not necessarily facially exposed,
see~\cite{DrPaWo:14}.
\index{minimal face, $\face(S,C)$}
\index{$\face(S,C)$,  minimal face}


\subsubsection{\SDPp,  \EDMp \; cones}
We denote $\Ss^n$ as the vector space of $n \times n$ real symmetric
matrices. The inner product of two symmetric matrices is defined to be
the trace $\langle A, B \rangle = \trace(AB)$.  The cones of positive
semidefinite and positive definite  matrices in $\Ss^n$ 
are denoted as $\Snp$ and $\Snpp$, respectively. They give rise to the
L\"owner partial order
\[
X\succeq Y \iff Y-X\in \Snp, \quad
X\succ Y \iff Y-X\in \Snpp.
\]
For a general cone, we use $X\succeq_K Y, X\succ_K Y$.
The cone of positive semidefinite matrices is facially exposed. Let 
$X\in \relint f, \, f\unlhd \Snp$,
and $X=UDU^T$ be its compact (orthogonal) spectral decomposition. Then,
see e.g.,~\cite{DrusWolk:16},
\[
f 
= \left\{ U 
\begin{bmatrix}
    R & 0 \\
    0  & 0 
\end{bmatrix}
 U^T: R \in \Ss_+^r\right\}
= \Snp \cap \left( V V^T \right)^{\perp},
\]
where $U$ is an orthogonal matrix and $r = \rank(X)$. Here $VV^T$ is the
exposing vector for the face, i.e., the columns of $V$ form a basis for
the orthogonal complement of the range of $U$.

A matrix $D \in \Ss^n$ is called an \emph{Euclidean distance matrix}
(EDM for short) if there exists $n$ points $p_i, i=1,...,n$ in
$\mathbb{R}^k$ such that $D_{ij} = ||p_i - p_j||^2$ for all indices $i,j$. We denote $\cEn$ as the cone of all $n\times n$ Euclidean distance matrices. The EDM cone $\mathcal{E}^n$ is linearly isomorphic to $\Ss^{n-1}_+$. 
See e.g.,~the book~\cite{MR3887551}.
The EDM matrices and PSD matrices
(Gram matrices) are connected by the \textdef{Lindenstrauss mapping, $K$}. There is an one-one correspondence between the centered PSD matrices and EDM matrices by Lindenstrauss mapping. 
We state the result as follows,
\index{$K$, Lindenstrauss mapping}
\[
K:=\Ss^n \rightarrow \Ss^n,
K(X)_{ij}:= X_{ii} + X_{jj} - 2X_{ij}.
\]
with adjoint and \emph{Moore-Penrose pseudoinverse},
\[
K^*(D) = 2(\Diag(De) - D),\quad K^{\dagger}(D) = -\frac{1}{2} 
J_n \offDiag(D) J_n,
\]
respectively. Here $\Diag$ is the adjoint of $\diag$, the matrix $J_n := I - \frac{1}{n} ee^T$ is the orthogonal
projection onto SC, and $\offDiag(D)$ refers to zeroing out the diagonal of $D$.

\subsection{Facial reduction}
In ~\cite{DrPaWo:14}, the facial exposedness of the linear image of
$\cK$ is shown to be equivalent to the singularity
degree,~\cite{sturm2000error,sremac2021error}, of~\eqref{prob:main} being one.
Note that the singularity degree is the minimum number of steps needed
when applying a general facial reduction 
algorithm,~\cite{borwein1981regularizing}, on a conic optimization problem.

A facial reduction algorithm aims to resolve lack of \textdef{strict
feasibility}
in the system \eqref{prob:main}.  The lack of strict feasiblity can
cause many problems in an optimization problem. For example, one can
have a positve duality gap, strong
duality can fail  to hold, and stability and numerical issues can arise.
The facial reduction algorithm is motivated by the
following \textdef{theorem of alternative} in conic optimization, 
see e.g.,~\cite{borwein1981regularizing}:
\begin{lemma}
\label{lem:thmalt}
Exactly one of the following two alternatives hold:
\begin{enumerate}
\index{strict feasibility}
    \item Strict feasibility holds in \cref{eq:feasset} 
$(\exists \hat X \in \ri (\cK) \mbox{ such that } \cM(\hat X) = b)$.
    \item 
There exists $v\in \bF$ such that:
\begin{equation}
\label{eq:auxsyst}
\cK^{\perp} \not\ni Z = \cM^*v \in \cK^*,  \mbox{ and } \langle v,b \rangle = 0.
\end{equation}
\end{enumerate}
\end{lemma}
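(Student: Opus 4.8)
The statement is a theorem of the alternative, so the plan is to prove the two halves of the dichotomy separately: first that the two alternatives are mutually exclusive (\emph{at most one} holds), and then that they are exhaustive (\emph{at least one} holds). The first half is a short duality computation exploiting the relative interior; the genuine work is the existence direction, which I would extract from a proper-separation argument applied to the image cone $\cM(\cK)\subseteq\bF$. Throughout I would work in the feasible regime $b\in\cM(\cK)$ (equivalently $\cF\neq\emptyset$), which is the setting where facial reduction is applied. This is also where the exact equality $\langle v,b\rangle=0$ in alternative~2 is attainable: for an infeasible $b$ a supporting functional can be forced to have $\langle v,b\rangle<0$, which would break the clean ``exactly one'' dichotomy.

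For mutual exclusivity, suppose both alternatives hold. Let $\hat X\in\ri(\cK)$ with $\cM(\hat X)=b$, and let $v$ be as in alternative~2 with $Z=\cM^*v$. Then
\[
\langle Z,\hat X\rangle=\langle \cM^*v,\hat X\rangle=\langle v,\cM(\hat X)\rangle=\langle v,b\rangle=0 .
\]
Now I would invoke the key relative-interior fact: if $Z\in\cK^*$ vanishes at a point $\hat X\in\ri(\cK)$, then $Z\in\cK^{\perp}$. Indeed, for any $x\in\cK$, relative interiority lets me push the segment from $x$ through $\hat X$ slightly past $\hat X$ while staying in $\cK$, so $(1+\delta)\hat X-\delta x\in\cK$ for some $\delta>0$; pairing with $Z\in\cK^*$ gives $-\delta\langle Z,x\rangle\ge 0$, hence $\langle Z,x\rangle\le 0$, while $\langle Z,x\rangle\ge 0$ also holds, forcing $\langle Z,x\rangle=0$. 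Thus $Z\in\cK^{\perp}$, contradicting the requirement $Z\notin\cK^{\perp}$ in alternative~2.

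For the existence half, assume alternative~1 fails and set $\cC:=\cM(\cK)$, a convex cone in $\bF$ with $0\in\cC$. Since $\cM$ is linear and $\cK$ convex, $\cM(\ri\cK)=\ri(\cC)$, so the failure of alternative~1 together with feasibility says precisely that $b\in\cC\setminus\ri(\cC)$, i.e.\ $b$ is a relative boundary point of $\cC$ lying in $\cC$. I would then apply the supporting-hyperplane (proper separation) theorem at $b$: there is $v\neq 0$ with $\langle v,b\rangle\le\langle v,y\rangle$ for all $y\in\cC$ and $\langle v,y_0\rangle>\langle v,b\rangle$ for some $y_0\in\cC$. The cone structure sharpens this: taking $y=0\in\cC$ gives $\langle v,b\rangle\le 0$, while $ty\in\cC$ for all $t>0$ forbids $\langle v,y\rangle<0$ (otherwise $\langle v,ty\rangle\to-\infty$), so $v\in\cC^*$; since $b\in\cC$ this forces $\langle v,b\rangle=0$, and properness ($y_0$) gives $v\notin\cC^{\perp}$. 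Finally I translate through the adjoint: $v\in\cC^*$ is equivalent to $\cM^*v\in\cK^*$, and $v\notin\cC^{\perp}$ to $\cM^*v\notin\cK^{\perp}$; together with $\langle v,b\rangle=0$ this is exactly alternative~2.

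The main obstacle is this existence direction, and within it the crux is producing a supporting functional that is \emph{nontrivial} on $\cM(\cK)$ (the ``proper'' in proper separation), since this is what yields $Z\notin\cK^{\perp}$ rather than a vacuous certificate; this is the reason one separates $b$ from $\ri(\cC)$ rather than from $\cC$ itself. The two facts I would cite rather than reprove are the relative-interior commutation $\cM(\ri\cK)=\ri(\cM(\cK))$ for linear images of convex sets and the supporting-hyperplane theorem at a relative boundary point. Neither requires $\cK$ or its image to be closed, precisely because $b$ is assumed to lie in $\cM(\cK)$.
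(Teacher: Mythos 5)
The paper offers no proof of this lemma at all --- it is stated with a pointer to Borwein--Wolkowicz --- so there is nothing in-text to compare against; your write-up is a correct, self-contained substitute and follows the standard route. Both halves check out: exclusivity rests on the correct relative-interior fact that $Z\in\cK^*$ vanishing at a point of $\ri(\cK)$ forces $Z\in\cK^{\perp}$ (your segment-extension argument is right), and existence follows from proper separation of $b$ from $\ri\cM(\cK)$ using $\cM(\ri\cK)=\ri\cM(\cK)$, sharpened by the cone structure to get $v\in\cM(\cK)^*$, $\langle v,b\rangle=0$, $v\notin\cM(\cK)^{\perp}$, and then pulled back through the adjoint identities that the paper later isolates separately as Lemma~\ref{lem:expose}. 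Your separation-at-the-relative-boundary step is also the same device the paper does spell out in its proof of Lemma~\ref{lem:exp}, so the argument is consistent with the paper's toolkit. The one substantive point worth keeping is your feasibility caveat: the ``exactly one'' dichotomy genuinely fails without $\cF\neq\emptyset$ (or at least $b\in\cl\cM(\cK)$) --- e.g.\ $\cK=\Rp$, $\cM$ the identity, $b=-1$ defeats both alternatives --- and the paper leaves this hypothesis implicit in its appeal to the feasible region \eqref{eq:feasset}.
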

We call $Z\in \cK^*$ an \textdef{exposing vector} for the face $f
\unlhd K$ if 
$X\in f \implies \langle Z,X\rangle = 0$.
We see that the so-called \textdef{auxiliary system}~\cref{eq:auxsyst}
is equivalent to the existence of the special exposing vector $Z$.
 One can then replace $\cK$ by the \textdef{exposed face} $\cK \cap
Z^{\perp}\unlhd \cK$ and obtain a smaller problem with a smaller cone. 
This also results in a reduction of the number of constraints
in~\cref{eq:feasset} to maintain surjectivity,
 see~\cite{borwein1981regularizing}.
This procedure can be repeated until strict feasibility is attained.
We summarize the facial reduction algorithm over a convex cone in
\Cref{algo:frnonexp1}, see~\cite{borwein1981regularizing,DrusWolk:16}:
\begin{algorithm}
\caption{facial reduction}
\begin{algorithmic}\label{algo:frnonexp1}
\STATE \textbf{Input:} $(P)$\\
\STATE \textbf{Output:} A sequence of reducing certificates $Z_i, \,
i=0,1,\ldots,k$\\
\STATE  Set $i \leftarrow 0$, let $\cF_i \leftarrow \cK, \,
i=0,1,\ldots,k$, 
\STATE \textbf{Step 1:} Find an exposing vector $  Z_i = \cM^* v_i \in (\cF_i)^*$, $Z_i \notin \cF_i^{\perp},$ $\langle v_i, b \rangle = 0$. 
\STATE If no such $Z_i$ exists, then exit.
\STATE Set $\cF_{i+1} \leftarrow \cF_i \cap Z_i^{\perp}$
\STATE $i = i + 1$
\STATE Return to Step 1.
\STATE Output: the minimal face $\cF_k$ of $\cK$ containing the feasible region of $(P)$. 
\end{algorithmic}
\end{algorithm}




Recall that the smallest number of steps of 
facial reduction required to obtain the
mimimum face is called the \emph{singularity degree} of the system
\eqref{eq:feasset} over $\cK$.






The SDP cones and EDM cones are ``nice" cones\cite{Pataki:07}. A closed convex cone $\cK$ is nice if $\cK^* + E^{\perp}$ is closed for any face $E$ of $\cK$. Due to this "niceness", one can construct a family of extended duals such that strong duality holds by the facial reduction procedure\cite{pataki2013strong,ramana1997strong,ramana1997exact}.   The length of the sequence in the extended duals is exactly the singularity degree.

Two closely related problems in the literature are the EDM completion problem and PSD completion problem. In those problems, we are given a partial EDM or PSD matrix whose entries are specified only on a subset of positions  and the goal is to fill in the missing entries so that the resulting matrix is EDM or PSD. If we consider the position of a specified entry as an edge of a graph, then we can associate a graph $G$ to the projection.   It appears that the singularity degree of EDM and PSD problems are closely related to the graph $G$.  

The relation between the singularity degree of the completion problem and the underlying graph $G$ has been an active research area. In~\cite{DrPaWo:14}, it is shown that the singularity degree of EDM and PSD completion problem is at most 1 when the underlying graph $G$ is chordal. The EDM completion problem is equivalent to finding frameworks with the specified entries (distances) in Euclidean spaces while the PSD completion is equivalent to finding spherical frameworks on a unit sphere. In ~\cite{gortler2014generic}, it is shown that a generic universal rigid framework in an Euclidean space has singularity degree at most 1. In~\cite{tanigawa2017singularity}, the worse case singularity degree of a spherical framework underlying a graph $G$ is studied, and it is shown to be at most 1 if the $G$ is chordal. Also in~\cite{tanigawa2017singularity}, spherical frameworks whose singularity degree grows linearly in the number of vertices are constructed.

\section{Singularity degree of faces for non-facially exposed cones}
\label{sect:singdeg}
To formally define singularity degree of a face, we first start with an example. 
We consider a cone whose cross section is a half circle union a square
as shown in set $C, 0 \notin \conv(C)$, \Cref{fig:pic1}.
\begin{figure}[h]
    \centering
    \includegraphics[width=0.5\textwidth,height=0.2\textheight]{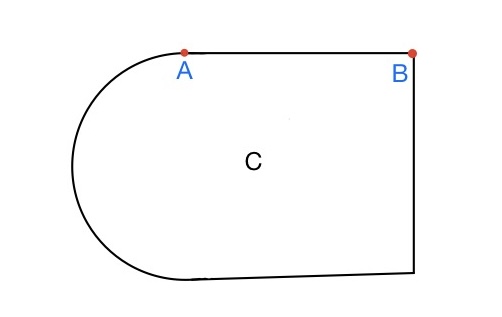}
    \caption{cross section of a non facially exposed cone}
    \label{fig:pic1}
\end{figure}
The cone $\cK= \cone(C)\subset \R^3$, i.e.,~the closed convex cone
generated by the compact \textdef{base}, $C$.

The ray (line) through the point $A$, $f_A:=\cone A\unlhd \cK$, 
represents a face of the convex cone $\cK=\cone C$.
Any exposed face of $\cK$ containing $A$ 
must also contain the line segment $AB$.
We can find an exposing vector $d_1 \in \cK^*$ such that $\cone(AB) =
\cone(C) \cap d_1^{\perp}$, and another exposing vector $d_2 \in (C \cap
d_1^{\perp})^*$ such that we get
\[
	\cone(A) = \cK \cap d_1^{\perp} \cap d_2^{\perp}.
\]
Hence we informally define the "singularity degree" of $A$ to be 2,
see~\Cref{def:singdegface} and \Cref{algo:frnonexp}, below.
\begin{algorithm}
\caption{singularity degree of  face $\cF_t$ of cone $\cK$}
\begin{algorithmic}\label{algo:frnonexp}
\STATE \textbf{Input:} A cone $\cK\subseteq \bF$, a finite dimensional Euclidean
space,\; a proper face $\cF_t \unlhd \cK$. \\
\STATE \textbf{Output:} Singularity degree $d$\\
\STATE  Set $i \leftarrow 0$, let $\cF_i \leftarrow \cK$, 
\STATE \textbf{Step 1:} Find an exposing vector $ d_i \in \cF_i^*$, $d_i \notin \cF_i^{\perp} $ and $d_i \perp \cF_t$
\STATE If no such $d_i$ exists, then exit and return $d =i$.
\STATE Set $\cF_{i+1} \leftarrow \cF_i \cap d_i^{\perp}$
\STATE $i = i + 1$
\STATE Return to Step 1.
\STATE Output $i$.
\end{algorithmic}
\end{algorithm}


The following lemma plays a key role in defining the singularity degree of a face.
\begin{lemma}[{\cite[Proposition 3.6]{borwein1981regularizing}}]
\label{lem:exp}
Let $\cK$ be a convex cone and $\cF \unlhd \cK$, be a proper face. 
Then $\cF \unlhd \cL \unlhd \cK$ for some \emph{exposed} face 
$\cL \unlhd \cK, \cL \neq \cK$.
\end{lemma}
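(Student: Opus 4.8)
The plan is to produce an exposed proper face directly, by exhibiting a single vector in $\cK^*$ that vanishes on $\cF$ but not on all of $\cK$. Concretely, I want to find $v \in \cK^* \setminus \cK^\perp$ with $\cF \subseteq v^\perp$; then $\cL := \cK \cap v^\perp$ is exposed by the very definition of exposedness, it is proper since $v \notin \cK^\perp$ forces $\cL \neq \cK$, and it contains $\cF$. To locate such a $v$, first pick a point $x_0 \in \ri(\cF)$ (nonempty, as $\cF$ is a nonempty convex set). The key preliminary observation is that $x_0$ lies on the relative boundary of $\cK$: indeed, if a face meets $\ri(\cK)$ then it equals $\cK$, so a proper face $\cF$ satisfies $\cF \cap \ri(\cK) = \emptyset$, whence $\ri(\cF) \subseteq \cK \setminus \ri(\cK) \subseteq \relbdry(\cK)$. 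I would prove the ``a face meeting the relative interior is everything'' claim by the standard relative-interior prolongation: given $z \in \cF \cap \ri(\cK)$ and any $x \in \cK$, one can push slightly past $z$ away from $x$ to obtain $y = z + \delta(z-x) \in \cK$, write $z$ as a strict convex combination of $x$ and $y$, and invoke the defining face property to conclude $x \in \cF$, forcing $\cF = \cK$.

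Next, because $x_0$ is a relative boundary point of the convex set $\cK$, the proper supporting hyperplane theorem yields a vector $v$ with $\langle v, \cdot\rangle \geq 0$ on $\cK$, with $\langle v, x_0\rangle = 0$, and with $v$ not identically zero on $\cK$; in our notation this says exactly $v \in \cK^* \setminus \cK^\perp$. (If $\cK$ is not closed I would apply the theorem to $\cl(\cK)$ and use that $\ri$, $\cK^*$, and $\cK^\perp$ are unchanged under closure, since $\cl(\cK)$ and $\cK$ have the same span and relative interior.) It then remains to upgrade ``$v$ vanishes at $x_0$'' to ``$v$ vanishes on all of $\cF$''. This is exactly where $x_0 \in \ri(\cF)$ is used: for an arbitrary $x \in \cF$ the point $x_0 + \delta(x_0 - x)$ stays in $\cF \subseteq \cK$ for small $\delta > 0$, so $0 \leq \langle v, x_0 + \delta(x_0-x)\rangle = -\delta \langle v, x\rangle$, giving $\langle v, x\rangle \leq 0$, while $x \in \cK$ gives $\langle v, x\rangle \geq 0$; hence $\langle v, x\rangle = 0$. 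Thus $\cF \subseteq \cK \cap v^\perp = \cL$.

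Finally I would check $\cF \unlhd \cL$, which is immediate from transitivity of faces: if $a, b \in \cL \subseteq \cK$ and $\tfrac12(a+b) \in \cF$, then since $\cF \unlhd \cK$ we get $a, b \in \cF$, so $\cF$ is a face of $\cL$. Combined with $\cL$ exposed and $\cL \neq \cK$, this yields the claimed chain $\cF \unlhd \cL \unlhd \cK$. I expect the only genuinely delicate points to be the two relative-interior arguments (that a proper face is disjoint from $\ri(\cK)$, and that a nonnegative functional vanishing at a relative-interior point of $\cF$ must vanish on all of $\cF$) together with the careful invocation of the proper supporting hyperplane theorem when $\cK$ is not assumed closed; the exposedness of $\cL$ and the face-transitivity step are then routine.
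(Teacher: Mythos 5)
Your proposal is correct and follows essentially the same route as the paper: pick a relative-interior point of $\cF$, observe it cannot lie in $\ri(\cK)$ since $\cF$ is a proper face, support $\cK$ there by a hyperplane through the origin to get $v \in \cK^*\setminus\cK^{\perp}$, and use the relative-interior position of the point to force $v$ to vanish on all of $\cF$, so that $\cL = \cK \cap v^{\perp}$ works. Your write-up is in fact slightly more careful than the paper's on the peripheral points (relative versus topological boundary, the non-closed case, and the explicit verification that $\cF \unlhd \cL$ and $\cL \neq \cK$), but the core argument is identical.
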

\begin{proof}
We include a proof for completeness.
Assume $\cF$ is proper. Let $x \in \relint \cF$. Then, by definition of
a face, we get $x \in \partial \cK$, the boundary of $\cK$. 
\index{$\partial \cK$, boundary of $\cK$}
\index{boundary of $\cK$, $\partial \cK$}

By the hyperplane support version of the
Hahn-Banach separation theorem there is a hyperplane
supporting $\cK$ at $x$,~\cite{hiriart2004}. Therefore there exists $s \in \cK^*$ such that 
\[
\langle s, x \rangle = t\text{  and  }
\langle s, y \rangle \geq t,\, \forall  y \in \cK.
\]

We claim that $t = 0$.  If $t > 0$, then $\lim_{n \infty} \langle s,
\frac{x}{n} \rangle = 0 <t$, a contradiction. Also if $ t < 0$, then for
$n$ large enough, we have $\langle s, n x \rangle < t$, again a contradiction.

We now claim that $\langle s, \bar{x} \rangle = 0$ for any $\bar{x} \in
\cF$. Let $\bar{x} \in \cF$. Since $x \in \relint \cF$, we have $x =
\lambda_1 \bar{x} + (1-\lambda_1) x_1$ for some $x_1 \in \cF$ and
$\lambda_1 \in (0,1)$. Therefore $\langle s, x \rangle = 0$ implies
$\langle s, \bar{x} \rangle = \langle s, x_1 \rangle = 0$.

From this it follows that $\emptyset\neq \cF_1=s^{\perp}\cap \cK\subset\partial \cK$ and $\cF_1$ is an exposed face of $\cK$ containing $\cF$.


\end{proof}

\begin{theorem}
Algorithm~\ref{algo:frnonexp} terminates in a finite number of 
steps with $\cF_d = \cF_t$
\end{theorem}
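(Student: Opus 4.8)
The plan is to regard the algorithm as generating a strictly decreasing chain of faces of $\cK$, namely $\cK=\cF_0\supsetneq\cF_1\supsetneq\cdots$, and to control it with two invariants: each successful iteration strictly lowers the dimension, and the target face $\cF_t$ remains inside every $\cF_i$. The first invariant will force termination after at most $\dim\cK$ iterations via a dimension count; the second, combined with the test that triggers termination, will pin the terminal face down to $\cF_t$. The only substantial ingredient is \Cref{lem:exp}, which I would reserve for the final step.

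First I would verify that each $\cF_i$ is genuinely a face of $\cK$ and that each iteration is strict. Since $\cF_0=\cK$ and $d_i\in(\cF_i)^*$ makes $d_i^\perp$ a supporting hyperplane of $\cF_i$, the set $\cF_{i+1}=\cF_i\cap d_i^\perp$ is an exposed face of $\cF_i$; as a face of a face is a face, $\cF_{i+1}\unlhd\cK$. The requirement $d_i\notin(\cF_i)^\perp$ (together with $d_i\in(\cF_i)^*$) supplies a point $x\in\cF_i$ with $\langle d_i,x\rangle>0$, so $x\notin\cF_{i+1}$; because $\cF_{i+1}\subseteq d_i^\perp$ while this $x\in\cF_i$ lies off $d_i^\perp$, the subspace $\spanl\cF_{i+1}$ is properly contained in $\spanl\cF_i$, whence $\dim\cF_{i+1}<\dim\cF_i$. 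Dimensions being nonnegative integers bounded by $\dim\cK$, the loop runs finitely often; write $d$ for the terminal index. The containment invariant I would settle by induction: $\cF_t\subseteq\cF_0$ trivially, and $\cF_t\subseteq\cF_i$ together with $d_i\perp\cF_t$ gives $\cF_t\subseteq d_i^\perp$ and hence $\cF_t\subseteq\cF_i\cap d_i^\perp=\cF_{i+1}$. Thus $\cF_t\subseteq\cF_d$.

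The crux, and the step I expect to be the main obstacle, is upgrading $\cF_t\subseteq\cF_d$ to equality at termination. I would argue by contradiction: if $\cF_t\subsetneq\cF_d$, then since $\cF_t\unlhd\cK$ and $\cF_t\subseteq\cF_d\unlhd\cK$, the set $\cF_t$ is a \emph{proper} face of $\cF_d$. Applying \Cref{lem:exp} to the convex cone $\cF_d$ (a face of a cone is again a convex cone) and its proper face $\cF_t$ yields an exposed face $\cL$ with $\cF_t\unlhd\cL\unlhd\cF_d$ and $\cL\neq\cF_d$, hence a vector $s\in(\cF_d)^*$ with $\cL=\cF_d\cap s^\perp$. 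I would then check that $s$ satisfies all three requirements of Step~1 for $i=d$: it lies in $(\cF_d)^*$; it lies outside $(\cF_d)^\perp$, since $\cL\neq\cF_d$ forces some $x\in\cF_d$ with $\langle s,x\rangle\neq0$; and $\cF_t\subseteq\cL\subseteq s^\perp$ gives $s\perp\cF_t$. This exhibits a legal exposing vector $d_d=s$, contradicting the assumption that the algorithm terminated at step $d$. The delicacy is precisely that $\cF_d$ may fail to be facially exposed, so $\cF_t$ need not be exposable inside $\cF_d$ by a single vector; \Cref{lem:exp} rescues the argument by guaranteeing that whenever $\cF_t$ is a proper face one can always peel off at least one more exposed face strictly between $\cF_t$ and $\cF_d$, which is exactly what prevents the algorithm from stalling above $\cF_t$.
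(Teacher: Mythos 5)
Your proof is correct and follows essentially the same route as the paper's: a strict dimension drop at each iteration forces finite termination, the invariant $\cF_t\subseteq\cF_i$ is preserved because $d_i\perp\cF_t$, and Lemma~\ref{lem:exp} supplies one more legal exposing vector whenever $\cF_t$ is still a proper face of $\cF_d$, contradicting termination. If anything, your version is more explicit than the paper's at the final contradiction step (where the paper only asserts the existence of the exposing vector) and obtains the dimension drop directly from $d_i\notin\cF_i^{\perp}$ rather than via Lemma~\ref{lem:exp}.
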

\begin{proof}
Since $\cK$ is a cone in a finite dimensional Euclidean space, $\cK$ is finite dimensional. Since $d_i \notin \cF_i^{\perp}$ at each iteration of Algorithm~\ref{algo:frnonexp}, we have $\cF_{i+1}$ is a proper face of $\cF_i$, therefore $\cF_{i+1}$ is contained in a proper exposed face of $\cF_i$ by Lemma~\ref{lem:exp}, hence the dimension is dropped at least by 1. It is easy to see that $\cF_t \subseteq \cF_i$ at each step. If $\cF_t$ is a proper face of $\cF_d$, then there always exists an exposing vector $d_i \in \cF_d^*$, $d_i \notin \cF_d^{\perp} $ and $d_i \perp \cF_t$, a contradiction. Therefore $\cF_d = \cF_t$.
\end{proof} 

\begin{definition}
\label{def:singdegface}
The minimal number $d$ of  steps taken in Algorithm~\ref{algo:frnonexp} to  obtain $\cF_t$ is defined as the singularity degree of $\cF_t$ over $\cK$.
\end{definition}

The minimum number of steps taken in Algorithm~\ref{algo:frnonexp} can be achieved by a "greedy" strategy, i.e., we seek $d_i \in \ri (\cF_i^* \cap \{ \cF_t\})^{\perp}$. The following lemma shows the correctness of this strategy. We note a similar result about the facial reduction result is in~\cite[Proposition 13]{lourencco2018facial}.

\begin{lemma}\label{lem:min}
Assume $\bar{\cF} \subseteq \cF$, and let $\bar{d} \in \ri (\bar{F}^* \cap \{ \cF_t\}^{\perp})$ and $d \in F^* \cap \{ \cF_t\}^{\perp}$. Then $\bar{\cF}\cap \bar{d}^{\perp} \subseteq \cF\cap d^{\perp}$.
\end{lemma}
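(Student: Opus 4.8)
The plan is to reduce the claim to the standard principle that, among all exposing vectors drawn from a fixed convex set, a relative-interior vector exposes the \emph{smallest} face. First I would record the basic monotonicity of duality: since $\bar{\cF} \subseteq \cF$, passing to dual cones reverses the inclusion, so that $\cF^* \subseteq \bar{\cF}^*$. Intersecting both with $\{\cF_t\}^\perp$ gives
\[
d \in \cF^* \cap \{\cF_t\}^\perp \subseteq \bar{\cF}^* \cap \{\cF_t\}^\perp =: S,
\]
while by hypothesis $\bar d \in \ri S$. Thus both $d$ and $\bar d$ live in the \emph{same} convex cone $S$, of which $\bar d$ is a relative-interior point. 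This observation is the crux: it is what makes the two a priori unrelated exposing steps (one inside $\bar{\cF}$, one inside $\cF$) directly comparable.

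The core step is then a self-contained fact: for a convex cone $K$ and a convex subset $S \subseteq K^*$, if $z \in \ri S$ then $K \cap z^\perp \subseteq K \cap (z')^\perp$ for every $z' \in S$. I would prove this by the usual ``perturb past the relative-interior point'' argument. Given $x \in K \cap z^\perp$ and any $z' \in S$, relative interiority lets me form $w = z + t(z - z') \in S \subseteq K^*$ for some small $t > 0$; pairing with $x \in K$ gives $0 \le \langle w, x\rangle = -t\langle z', x\rangle$, forcing $\langle z', x\rangle \le 0$, while $z' \in K^*$ and $x \in K$ force $\langle z', x\rangle \ge 0$. Hence $\langle z', x\rangle = 0$, i.e.\ $x \in (z')^\perp$. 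Applying this with $K = \bar{\cF}$, $z = \bar d$, and $z' = d$ yields $\bar{\cF} \cap \bar d^\perp \subseteq \bar{\cF} \cap d^\perp$.

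Finally I would combine the two facts: any $x \in \bar{\cF} \cap \bar d^\perp$ satisfies $x \in \bar{\cF} \cap d^\perp$ by the core step, hence $\langle d, x\rangle = 0$, and also $x \in \bar{\cF} \subseteq \cF$, so that $x \in \cF \cap d^\perp$. This gives exactly the desired inclusion $\bar{\cF} \cap \bar d^\perp \subseteq \cF \cap d^\perp$. The only delicate point---the main obstacle---is the relative-interior argument, where one must ensure the perturbation $w = z + t(z-z')$ remains inside $S$ (equivalently, inside $K^*$); this is precisely what $z \in \ri S$ supplies, and it is the reason the greedy choice $\bar d \in \ri(\bar{\cF}^* \cap \{\cF_t\}^\perp)$ produces the minimal reduction.
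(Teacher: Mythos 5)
Your proposal is correct and follows essentially the same route as the paper: both exploit $\cF^*\subseteq\bar{\cF}^*$ to place $d$ in $\bar{\cF}^*\cap\{\cF_t\}^{\perp}$, then use the relative-interior prolongation $w=(1+t)\bar d - t\,d$ (the paper's $\alpha\bar d+(1-\alpha)d$ with $\alpha>1$) paired against $x\in\bar{\cF}\cap\bar d^{\perp}$ to force $\langle d,x\rangle\le 0$, and combine with $\langle d,x\rangle\ge 0$ from $d\in\bar{\cF}^*$. Your packaging of the key step as a standalone fact about relative-interior exposing vectors is a clean presentational choice, but the underlying argument is identical.
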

\begin{proof}
Since $\bar{\cF} \subseteq \cF$, we have $\cF^*  \subseteq \bar{\cF}^*$. Hence $d \in \bar{\cF}^*$ and $d \in \bar{F}^* \cap \{ \cF_t\}^{\perp}$. Since $\bar{d} \in \ri (\bar{F}^* \cap \{ \cF_t\}^{\perp}) $, there exists some $\alpha > 1$ such that 
\[
\alpha \bar{d} + (1 - \alpha) d \in \bar{F}^* \cap \{ \cF_t\}^{\perp}
\]

Let $x \in \bar{\cF} \cap \bar{d}^{\perp}$, then $\langle x, \alpha \bar{d} + (1 - \alpha) d \rangle \geq 0$. Since $\langle x, \bar{d} \rangle = 0$, we have $\langle x, (1-\alpha)d \rangle \geq 0$, so $\langle x, d \rangle \leq 0$. Since $d \in \bar{\cF}^*$ and $x \in \bar{\cF}$, we have $\langle x, d \rangle \geq 0$. Therefore $\langle x, d \rangle =0$. So $x \in d^{\perp}$ and we have $\bar{\cF}\cap \bar{d}^{\perp} \subseteq \cF\cap d^{\perp}$.
\end{proof}
We now have the following proposition following Lemma~\ref{lem:min}.
\begin{proposition}\label{prop:min}
The minimum number of steps taken in Algorithm~\ref{algo:frnonexp} is obtained by computing $d_i \in \ri (\cF_i^* \cap  \{\cF_t\} ^{\perp})$ in each iteration.
\end{proposition}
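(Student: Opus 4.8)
The plan is to compare two executions of \Cref{algo:frnonexp}: the ``greedy'' execution that always selects the relative-interior vector $\bar d_i \in \ri(\bar\cF_i^* \cap \{\cF_t\}^{\perp})$, and an arbitrary valid execution, and to show the greedy one is never slower. Write the greedy sequence of faces as
\[
\bar\cF_0 = \cK \supseteq \bar\cF_1 \supseteq \cdots \supseteq \bar\cF_{\bar d} = \cF_t,
\]
and let
\[
\cF_0 = \cK \supseteq \cF_1 \supseteq \cdots \supseteq \cF_k = \cF_t
\]
be any other execution, using admissible exposing vectors $d_i \in \cF_i^* \cap \{\cF_t\}^{\perp}$ with $d_i \notin \cF_i^{\perp}$. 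By \Cref{def:singdegface} the singularity degree is the smallest attainable $k$, so it suffices to prove $\bar d \le k$ for every competing execution.

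The heart of the argument is the monotone containment $\bar\cF_i \subseteq \cF_i$, proved by induction on $i$ as far as the greedy run is defined. The base case $\bar\cF_0 = \cK = \cF_0$ is immediate. For the inductive step, assume $\bar\cF_i \subseteq \cF_i$ and apply \Cref{lem:min} with $\bar\cF = \bar\cF_i$, $\cF = \cF_i$, $\bar d = \bar d_i$, and $d = d_i$; the hypotheses $\bar\cF_i \subseteq \cF_i$, $\bar d_i \in \ri(\bar\cF_i^* \cap \{\cF_t\}^{\perp})$, and $d_i \in \cF_i^* \cap \{\cF_t\}^{\perp}$ are exactly those required by the lemma, and it yields
\[
\bar\cF_{i+1} = \bar\cF_i \cap \bar d_i^{\perp} \subseteq \cF_i \cap d_i^{\perp} = \cF_{i+1},
\]
which closes the induction. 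To finish, recall from the termination result established above (the theorem preceding \Cref{def:singdegface}) that every face in any valid execution contains the target, so in particular $\cF_t \subseteq \bar\cF_k$. Running the induction up to step $k$ gives $\bar\cF_k \subseteq \cF_k = \cF_t$, and combining the two inclusions forces $\bar\cF_k = \cF_t$; hence the greedy execution has reached $\cF_t$ no later than step $k$, i.e.\ $\bar d \le k$. As $k$ ranges over all attainable step counts, the greedy rule attains their minimum, the singularity degree.

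The one genuine subtlety I expect to have to dispatch is checking that the greedy rule actually produces a \emph{legal} run of \Cref{algo:frnonexp}, namely that whenever $\bar\cF_i \neq \cF_t$ the chosen $\bar d_i \in \ri(\bar\cF_i^* \cap \{\cF_t\}^{\perp})$ is admissible, i.e.\ $\bar d_i \notin \bar\cF_i^{\perp}$ (so that a strict reduction occurs and the induction step above makes sense). This holds because $\bar\cF_i^{\perp}$ is a \emph{subspace}: if the relative interior of the convex set $S := \bar\cF_i^* \cap \{\cF_t\}^{\perp}$ were contained in $\bar\cF_i^{\perp}$, then its affine hull, and hence $S$ itself, would lie in $\bar\cF_i^{\perp}$, meaning no admissible exposing vector exists at all and the algorithm has already terminated with $\bar\cF_i = \cF_t$ by the termination theorem, contradicting $\bar\cF_i \neq \cF_t$. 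Everything else is the bookkeeping of matching the two step counts, which the induction handles cleanly; the real mathematical content is carried entirely by \Cref{lem:min}.
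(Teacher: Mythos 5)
Your proof is correct and follows the same route as the paper, which simply states that the proposition is a direct consequence of Lemma~\ref{lem:min}; you have filled in the induction on the containment $\bar\cF_i \subseteq \cF_i$ that the paper leaves implicit. Your additional check that the greedy choice remains admissible whenever $\bar\cF_i \neq \cF_t$ is a worthwhile detail the paper glosses over, and your argument for it is sound.
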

\begin{proof}
Direct consequence from Lemma~\ref{lem:min}.
\end{proof}
\subsection{Connection to linear image of a convex cone}

We now connect the singularity degree of a conic optimization problem with the singularity degree of the minimal face in the projected cone.
We follow the definition of  $\bE,\bF,\cM$  in section~\ref{subsec:notation}   
We start with the following Lemma
\begin{lemma}  
\label{lem:expose} 
With $\cK, \cM$ as above, we have:
\begin{enumerate}
    \item $\cM(\cK \cap (\cM^*y)^{\perp}) = \cM(\cK) \cap \{ y\}^{\perp}$ 
    \item $y_2 \in (\cM(\cK) \cap y_1^{\perp})^* \iff \cM^*y_2 \in (\cK \cap (\cM^*y_1)^{\perp})^*$ 
    \item 
\label{item:properexposedface}
$\cM^*y$ exposes a proper face of $\cK$ if and only if $y$ exposes a proper face of $\cM(\cK)$.
\end{enumerate}
\end{lemma}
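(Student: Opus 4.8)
The plan is to split the biconditional into two separate equivalences and verify each using the earlier parts of the lemma. Recalling the definition of exposing vector from \Cref{subsec:notation}, the phrase ``$\cM^*y$ exposes a proper face of $\cK$'' means exactly that $\cM^*y \in \cK^*$ \emph{and} $\cK \cap (\cM^*y)^{\perp} \neq \cK$, with the analogous reading for $y$ and $\cM(\cK)$. Note that $0$ always belongs to a set of the form $\cK \cap v^{\perp}$, since $\cK$ is a cone and $\langle v, 0 \rangle = 0$; hence such an exposed face is never empty and ``proper'' means unambiguously ``not the whole cone.'' So I would prove (a) $\cM^*y \in \cK^*$ iff $y \in (\cM(\cK))^*$, and (b) assuming this common membership, $\cK \cap (\cM^*y)^{\perp}$ is proper iff $\cM(\cK) \cap y^{\perp}$ is proper.

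For (a), I would invoke part (2) of this lemma with $y_1 = 0$: since $0^{\perp} = \bF$ and $(\cM^*0)^{\perp} = \bE$, part (2) collapses to exactly $y \in (\cM(\cK))^* \iff \cM^*y \in \cK^*$. (This is in any case immediate from the adjoint identity $\langle y, \cM(x)\rangle = \langle \cM^*y, x\rangle$ together with $\cM(\cK) = \{\cM(x) : x \in \cK\}$.)

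For (b), I would first rewrite properness as the existence of a strictly positive pairing. The face $\cK \cap (\cM^*y)^{\perp}$ equals $\cK$ precisely when $\langle \cM^*y, x \rangle = 0$ for every $x \in \cK$; since $\cM^*y \in \cK^*$ forces $\langle \cM^*y, x \rangle \geq 0$, this means properness is equivalent to the existence of some $x \in \cK$ with $\langle \cM^*y, x \rangle > 0$. The same reasoning gives that $\cM(\cK) \cap y^{\perp}$ is proper iff there is $w \in \cM(\cK)$ with $\langle y, w \rangle > 0$. The adjoint identity $\langle \cM^*y, x \rangle = \langle y, \cM(x) \rangle$, combined with the fact that $x \mapsto \cM(x)$ maps $\cK$ onto $\cM(\cK)$, transfers such a witness in both directions: from $x \in \cK$ with $\langle \cM^*y, x\rangle > 0$ take $w = \cM(x)$, and from $w = \cM(x) \in \cM(\cK)$ with $\langle y, w \rangle > 0$ reuse that $x$. (Part (1), $\cM(\cK \cap (\cM^*y)^{\perp}) = \cM(\cK) \cap y^{\perp}$, records the corresponding set-level statement that the image of the exposed face is the exposed face of the image.)

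Since parts (1) and (2) do all the real work, I expect no serious obstacle; the only point requiring care is step (b), namely converting ``$f \neq \cK$'' into the existence of a strictly positive inner product. This is exactly where the dual-cone membership from step (a) is genuinely used, since it rules out negative pairings and lets ``$\neq 0$'' be upgraded to ``$> 0$.'' I would also briefly confirm that $\cM(\cK) \cap y^{\perp}$ really is a face of $\cM(\cK)$ when $y \in (\cM(\cK))^*$ (a supporting-hyperplane observation), so that the word ``exposes'' is justified on the image side.
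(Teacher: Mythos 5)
Your argument for item~3 is correct and follows essentially the same route as the paper's: dual-cone membership transfers across $\cM$ via the adjoint identity, and properness transfers because, once $\cM^*y\in\cK^*$ is known, the face $\cK\cap(\cM^*y)^{\perp}$ is proper exactly when $\cM^*y\notin\cK^{\perp}$, i.e.\ when some $x\in\cK$ has $\langle\cM^*y,x\rangle>0$, and such a witness passes to $w=\cM(x)\in\cM(\cK)$ and back. Your explicit care in upgrading ``$\neq 0$'' to ``$>0$'' using dual-cone membership, and your remark that $0$ lies in every exposed face so properness unambiguously means ``not the whole cone,'' are if anything slightly cleaner than the paper's wording (the paper phrases the same facts as $\cM^*y\notin\cK^{\perp}$ and $y\notin\cM(\cK)^{\perp}$).

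The one shortfall is coverage: the statement comprises all three items, and you invoke items~1 and~2 without proving them. Both are routine with exactly the tools you already deploy. Item~1 is a two-way element chase: $z\in\cM(\cK\cap(\cM^*y)^{\perp})$ means $z=\cM(x)$ with $x\in\cK$ and $\langle x,\cM^*y\rangle=\langle\cM(x),y\rangle=0$, giving $z\in\cM(\cK)\cap y^{\perp}$, and conversely. The paper then obtains item~2 by substituting item~1 into the general identity $y_2\in\cM(S)^*\iff\cM^*y_2\in S^*$ applied to $S=\cK\cap(\cM^*y_1)^{\perp}$; your parenthetical observation about the adjoint identity is exactly this fact in the case $S=\cK$. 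So nothing in your plan would fail, but for the lemma to be fully proved these two parts need to be written out rather than cited.
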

\begin{proof}
\begin{enumerate}

    \item Let $z \in \cM(\cF \cap (\cM^*y)^{\perp})$, then $z = \cM(x)$ for some $x \in \cF$ and $\langle x, \cM^* y \rangle = \langle \cM(x), y \rangle = 0$. Therefore $ \cM(x) \in y^{\perp}$. Hence $z \in \cM(\cF) \cap y^{\perp}$ and $\cM(\cF \cap (\cM^*y)^{\perp}) \subseteq \cM(\cF) \cap \{ y\}^{\perp}$.
    
    For the other direction, let $z \in \cM(\cF) \cap \{ y\}^{\perp}$. Then $ z = \cM(x)$ for some $x \in \cF$ and $\langle \cM(x), y \rangle = \langle x, \cM^*y \rangle = 0$. Therefore $x \in (\cM^*y)^{\perp}$. Hence $ x \in \cF \cap (\cM^*y)^{\perp}$ and $z \in \cM(\cF \cap (\cM^*y)^{\perp})$. Therefore $\cM(\cF) \cap \{ y\}^{\perp} \subseteq \cM(\cF \cap (\cM^*y)^{\perp})$ and the proof is complete.
    \item $y_2 \in (\cM(\cF) \cap y_1^{\perp})^*$ is same as $y_2 \in (\cM(\cF \cap (\cM^*y_1)^{\perp}))^*$ by the first equality which is equivalent to $\cM^*y_2 \in (\cF \cap (\cM^*y_1)^{\perp})^*$.
    
    

    \item Assume $\cM^*y$ exposes a proper face $E$ of $\cK$, then $E = \cK \cap (\cM^*y)^{\perp}$ where $\cM^*y\in \cK^*$ and $\cM^*y \notin \cK^{\perp}$. So $y \in \cM(\cK)^*$ and there exists some $x \in \cK$ such that $\langle \cM^* y,  x \rangle = \langle y, \cM(x)\rangle \neq 0$. Hence $y \notin \cM(\cK)^{\perp}$. Therefore $F = \cM(E) = \cM(\cK) \cap y^{\perp}$ is a proper face of $\cM(\cK)$.
    
    For the other direction, assume $y$ exposes a proper face of $\cM(\cK)$, then $F = \cM(\cK) \cap y^{\perp}$ where $y \in \cM(\cK)^*$ and $y \notin \cM(\cK)^{\perp}$. Hence $\cM^*y \in \cK^*$ and there exists some $x \in \cK$ such that $\langle y, \cM(x) \rangle = \langle \cM^*y, x \rangle \notin 0$. So $\cM^*y \notin \cK^{\perp}$. Also $F = \cM(\cK) \cap y^{\perp} = \cM( \cK \cap (\cM^*y)^{\perp})$. $F$ is not empty implies $\cK \cap (\cM^*y)^{\perp}$ is not empty. So $\cM^*y$ exposes a proper face of $\cK$. 
\end{enumerate}
\end{proof}
\begin{theorem}\label{thm:singledeg}
Let $\cM: \E \to \bF$ be a linear transformation between two finite 
dimensional Euclidean spaces $\bE$ and $\bF$. 
Let $\cK \subset \E$ be a convex cone, $b\in \cF$ and
\[
\cF: = \{ X \in \cK: \cM(X) = b\}.
\]
Let $ v_1,  v_2, \cdots v_r \in b^{\perp}$, and $v_1 \in \cM(\cK)^*, v_2 \in (\cM(\cK)\cap v_1^{\perp})^*$, $\cdots$,$v_r \in (\cM(\cK)\cap \cdots \cap v_{r-1}^{\perp})^*$ and
\[
N := \cM(\cK)\cap v_1^{\perp}  \cap \cdots \cap
v_r^{\perp}, \quad
 E := \cK \cap \cM^*(v_1)^{\perp}  \cap \cdots \cM^*(v_r)^{\perp}.
\]
Then:
\begin{enumerate}

\item  $v_i$ exposes a face of $\cM(\cK) \cap v_1^{\perp}\cap \cdots \cap
v_{i-1}^{\perp}$ containing $b$ and  $\cM^*(v_i)$ exposes a  face of $\cK \cap \cM(v_1)^{\perp} \cap \cdots \cap \cM(v_{i-1})^{\perp}$ containing $F$.
\item  $N$ is a face of $\cM(\cK)$ containing $b$ and $E$ is a face of $\cK$ containing $F$;
\item $N = \face(b, \cM(\cK))$ if, and only if, $E = \face(F, \cK)$.
\end{enumerate}
\end{theorem}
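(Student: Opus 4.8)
The plan is to reduce everything to a single backbone identity and then read off the three claims. Write $F=\cF$ and set, for $0\le i\le r$,
\[
N_i := \cM(\cK)\cap v_1^{\perp}\cap\cdots\cap v_i^{\perp},\qquad
E_i := \cK\cap(\cM^*v_1)^{\perp}\cap\cdots\cap(\cM^*v_i)^{\perp},
\]
so that $N_0=\cM(\cK)$, $E_0=\cK$, $N_r=N$, and $E_r=E$. First I would prove by induction on $i$ that $\cM(E_i)=N_i$. The base case is immediate, and the inductive step is exactly \Cref{lem:expose}(1) applied to the convex set $E_{i-1}$ in place of $\cK$ (its proof is written verbatim for a general convex set), which gives $\cM(E_i)=\cM(E_{i-1}\cap(\cM^*v_i)^{\perp})=\cM(E_{i-1})\cap v_i^{\perp}=N_{i-1}\cap v_i^{\perp}=N_i$. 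In particular $\cM(E)=N$.

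Parts 1 and 2 then follow quickly. For each $i$, the hypothesis $v_i\in N_{i-1}^*$ makes $v_i$ a supporting functional of $N_{i-1}$, so the set $N_{i-1}\cap v_i^{\perp}=N_i$ where it vanishes is an exposed face of $N_{i-1}$, i.e.\ $N_i\unlhd N_{i-1}$; since $b\in\cM(\cK)$ (as $F\neq\emptyset$) and every $v_j\perp b$, we get $b\in N_i$. On the preimage side I would use the adjoint identity $v_i\in\cM(E_{i-1})^*\iff \cM^*v_i\in E_{i-1}^*$ (one line from $\langle v_i,\cM x\rangle=\langle\cM^*v_i,x\rangle$, or \Cref{lem:expose}(2)); combined with $\cM(E_{i-1})=N_{i-1}$, this converts $v_i\in N_{i-1}^*$ into $\cM^*v_i\in E_{i-1}^*$, whence $E_i=E_{i-1}\cap(\cM^*v_i)^{\perp}\unlhd E_{i-1}$, and $\langle\cM^*v_i,X\rangle=\langle v_i,b\rangle=0$ for $X\in F$ shows $F\subseteq E_i$. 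Transitivity of the face relation (a face of a face is a face) upgrades the chains $N=N_r\unlhd\cdots\unlhd N_0$ and $E=E_r\unlhd\cdots\unlhd E_0$ to $N\unlhd\cM(\cK)$ and $E\unlhd\cK$, which is Part 2.

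The heart of the argument is Part 3, and here the plan is to translate both minimality conditions into statements about relative interiors and to link the two sides through the identity $\cM(\relint E)=\relint\cM(E)=\relint N$ (relative interior commutes with linear images). Recall that for a face $G$ containing a convex set $S$ one has $G=\face(S,\cdot)$ if and only if $\relint G\cap\relint S\neq\emptyset$; applied to the single point $b$ this reads $N=\face(b,\cM(\cK))\iff b\in\relint N$, and applied to $F$ it reads $E=\face(F,\cK)\iff\relint E\cap\relint F\neq\emptyset$. Using $\relint N=\cM(\relint E)$, the condition $b\in\relint N$ is equivalent to the existence of $X\in\relint E$ with $\cM(X)=b$, i.e.\ to $\relint E\cap F\neq\emptyset$. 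The one genuinely non-formal step, which I expect to be the main obstacle, is the equivalence $\relint E\cap F\neq\emptyset\iff\relint E\cap\relint F\neq\emptyset$: the nontrivial forward direction needs a line-segment argument, taking $X_0\in\relint E\cap F$ and any $X_1\in\relint F$ and checking that the open segment between them lies in $\relint E\cap\relint F$. Chaining the equivalences gives $N=\face(b,\cM(\cK))\iff b\in\relint N\iff\relint E\cap F\neq\emptyset\iff\relint E\cap\relint F\neq\emptyset\iff E=\face(F,\cK)$, completing Part 3.
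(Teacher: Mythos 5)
Your proposal is correct, and Parts 1 and 2 proceed essentially as in the paper: the same inductive transfer of exposing vectors via Lemma~\ref{lem:expose}, the same orthogonality computation $\langle\cM^*v_i,X\rangle=\langle v_i,b\rangle=0$ to get $F\subseteq E_i$, and transitivity of the face relation. The identity $\cM(E_i)=N_i$, which you isolate as a backbone, is also implicitly what the paper relies on when it applies Lemma~\ref{lem:expose} at each stage.

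Part 3 is where you genuinely diverge. The paper argues by contradiction through the theorem of the alternative (Lemma~\ref{lem:thmalt}): if one side is not minimal, a further reducing certificate exists, and Lemma~\ref{lem:expose} transports it to the other side, producing a proper face that still contains $b$ (resp.\ $F$), contradicting minimality there. Your argument instead characterizes minimality intrinsically --- $N=\face(b,\cM(\cK))$ iff $b\in\relint N$, and $E=\face(F,\cK)$ iff $\relint E\cap\relint F\neq\emptyset$ --- and links the two sides through $\relint N=\relint\cM(E)=\cM(\relint E)$ together with a line-segment argument showing $\relint E\cap F\neq\emptyset$ iff $\relint E\cap\relint F\neq\emptyset$. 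Both routes are sound. The paper's version buys uniformity with the facial-reduction machinery the rest of the paper is built on (the contradiction is literally ``one more FR step would be possible''), whereas yours is more self-contained: it needs only standard convex-analysis facts about relative interiors (commutation with linear images, the relative-interior test for minimal faces) and avoids invoking the alternative theorem, which in the paper is stated only for the original system rather than for the reduced system over the face $E$ --- a small gap your approach sidesteps entirely. One housekeeping point common to both proofs: the equivalences silently assume $F\neq\emptyset$ (you flag this when asserting $b\in\cM(\cK)$), and your appeal to Lemma~\ref{lem:expose}(1) with $E_{i-1}$ in place of $\cK$ is legitimate since that proof never uses the cone structure, but it is worth saying so explicitly as you do.
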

\begin{proof}
\begin{enumerate}

\item 
Since $v_1 \in \cM(\cK)^*, v_2 \in (\cM(\cK)\cap v_1^{\perp})^*$, $\cdots$,$v_r \in (\cM(\cK)\cap \cdots \cap v_{r-1}^{\perp})^*$,   then by Lemma~\ref{lem:expose} we have $\cM^*v_1 \in \cK^*$, $\cM^*v_2 \in (\cK \cap (\cM^*y_1)^{\perp})^*$,$\cdots$,$\cM^*v_r \in (\cK\cap \cdots \cap (\cM^*v_{r-1})^{\perp})^*$. 
 Also $\langle v_i, b \rangle = \langle v_i, \cM(X) \rangle  = \langle \cM^* v_i, X \rangle = 0$ for any $X \in F$. 
So $v_1$ exposes a face of $\cM(\cK)$ containing $b$, $\cM^*v_1$ exposes a face of $\cK$ containg $F$, $v_2$ exposes a face of $\cM(\cK) \cap v_1^{\perp}$ containing $b$, $\cM^*v_2$ exposes a face of $\cK \cap (\cM^*y_1)^{\perp}$ containing $F$,... The conclusion follows through.


\item

 The conclusion follows from the property that a face of a face is also a face of the original cone.



\item Suppose $N = \face(b, \cM(\cK))$ and to the contrary $E $ is not the minimal face of $\cK$ containing $F$. Then by the theorem of alternative, there exists some nonzero $v$ such that $\cM^*v \in E^*$ exposing a proper face of $ E = \cK \cap \cM^*(v_1)^{\perp} \cap \cdots \cM^*(v_r)^{\perp}$ containing $F$. By Lemma~\ref{lem:expose}, $v$ exposes a proper face of $N = \cM(\cK) \cap v_1^{\perp} \cap \cdots \cap v_r^{\perp}$ containing $b$. However, this is impossible since $N$ is already the minimal face of $\cM(\cK)$ containing $b$. For the other direction, assume $E$ is the minimal face of $\cK$ containing $F$, suppose to the contrary that $N$ is not the minimal face of $\cM(\cK)$ containing $b$, then by Lemma~\ref{lem:exp}, there exists $v \in N^*$ exposing a proper face of $E$ containing $b$. Therefore $\cM^*v$ exposes a proper face of $E$ containing $F$, a contradiction.     

\end{enumerate}
\end{proof}
\begin{corollary}\label{cor:sing1}
The singularity degree of the system $\cF:= \{ X \in \cK: \cM(X) = b \}$ in $\bE$ is equal to the singularity degree of $N = \face(b, \cM(\cK))$ in $\bF$.
 \end{corollary}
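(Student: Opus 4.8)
The plan is to produce a length-preserving bijection between the admissible exposing-vector sequences of the two procedures, from which equality of the minimal lengths --- the two singularity degrees --- follows immediately. On the $\bF$ side, the singularity degree of $N=\face(b,\cM(\cK))$ is the least $d$ for which Algorithm~\ref{algo:frnonexp}, run on $\cM(\cK)$ with target face $N$, reaches $N$ in $d$ steps via vectors $v_1,\dots,v_d$. On the $\bE$ side, the singularity degree of $\cF$ is the least $k$ for which facial reduction (Algorithm~\ref{algo:frnonexp1}) reaches $\face(\cF,\cK)$ in $k$ steps via certificates $Z_i=\cM^*v_i$. I would match these through the correspondence $v_i\leftrightarrow \cM^*v_i$.

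First I would set up the step-by-step transfer using Lemma~\ref{lem:expose}. Writing the intermediate faces as $\cF_i^{\bF}=\cM(\cK)\cap v_1^\perp\cap\cdots\cap v_{i-1}^\perp$ and $\cF_i^{\bE}=\cK\cap(\cM^*v_1)^\perp\cap\cdots\cap(\cM^*v_{i-1})^\perp$, iterating part~1 gives $\cM(\cF_i^{\bE})=\cF_i^{\bF}$; part~2 shows $v_i\in(\cF_i^{\bF})^*$ iff $\cM^*v_i\in(\cF_i^{\bE})^*$; and part~3 shows $v_i$ exposes a proper face of $\cF_i^{\bF}$ iff $\cM^*v_i$ exposes a proper face of $\cF_i^{\bE}$. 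Thus a step is admissible in one algorithm precisely when the matched step is admissible in the other, and the two sequences advance in lockstep.

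The delicate point will be matching the feasibility constraints: facial reduction imposes $\langle v_i,b\rangle=0$, whereas Algorithm~\ref{algo:frnonexp} imposes the stronger-looking $v_i\perp N$. Here I would use that $b\in\relint N$, since $N$ is the minimal face of $\cM(\cK)$ containing $b$, together with the fact that every $\cF_i^{\bF}$ is a face of $\cM(\cK)$ containing $b$ (an easy induction, as $\langle v_i,b\rangle=0$ keeps $b$ in at each step), hence containing $N$. Then for $v_i\in(\cF_i^{\bF})^*\subseteq N^*$ with $\langle v_i,b\rangle=0$ and any $x\in N$, relative interiority yields $b-\epsilon(x-b)\in N$ for small $\epsilon>0$, so $\langle v_i,x\rangle\le\langle v_i,b\rangle=0$; combined with $\langle v_i,x\rangle\ge0$ this forces $v_i\perp N$. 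The reverse implication is trivial as $b\in N$, so the two constraints cut out the same admissible vectors.

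Finally I would invoke the equivalence in part~3 of Theorem~\ref{thm:singledeg}: after any common number $r$ of steps, $N=\face(b,\cM(\cK))$ holds iff $E=\face(\cF,\cK)$ holds, so the two runs terminate at their targets after exactly the same admissible sequences. Since $v_i\mapsto\cM^*v_i$ is a length-preserving bijection carrying terminating admissible sequences to terminating admissible sequences in both directions, the minimal lengths coincide, which is the asserted equality. The main obstacle is the constraint-matching step; everything else is a routine transcription through Lemma~\ref{lem:expose} and Theorem~\ref{thm:singledeg}.
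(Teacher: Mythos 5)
Your proposal is correct and takes essentially the same route as the paper, whose entire proof is the one-line observation that the corollary is a direct consequence of Lemma~\ref{lem:expose} and Theorem~\ref{thm:singledeg}. Your explicit reconciliation of the two admissibility constraints --- $\langle v_i,b\rangle=0$ in facial reduction versus $v_i\perp N$ in Algorithm~\ref{algo:frnonexp}, via $b\in\relint N$ --- fills in a detail the paper leaves implicit, but the underlying argument is the same.
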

\begin{proof}
This is a direct consequence from Lemma~\ref{lem:expose} and Theorem~\ref{thm:singledeg}.
\end{proof}

\subsection{The nullspace form}

We consider $\cF: = \{ X \in \cK: \cM(X) = b\}$ in its nullspace form,
i.e., let $\cF:= \{ Z \in \cK: \cA^*y + Z -C = 0 , y \in \cF)$ where $\cF$ is a finite dimensional linear space and $\cA$ is a linear transformation from $E$ to $\cF$. We define $L =  \cA^*(\cF)$ and let $\cK/L$ be the quotient cone and $C/L$ be a point on the quotient cone $\cK/L$. Then we have the following corollary.


\begin{corollary}\label{cor:sing2}
The singularity degree of the system $\cF:= \{ Z \in \cK: \cA^*y + Z -C = 0 , y \in \cF)$ is equal to the singularity degree of $\face(C/L, \cK/L)$, which is equal to the singularity degree of the nullspace form of the system $\mathcal{F}$.
 \end{corollary}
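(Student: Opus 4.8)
The plan is to realize the nullspace form as a special case of the linear-image setting already treated in \Cref{thm:singledeg} and \Cref{cor:sing1}, by choosing the linear map to be the canonical quotient (orthogonal projection) onto $\bE/L$. First I would unwind the feasible set: since $L=\range(\cA^*)$ is a linear subspace of $\bE$, the constraint $\cA^*y+Z-C=0$ says exactly $Z\in C-L$, and because $L$ is a subspace, $C-L=C+L$ is the affine subspace through $C$ with direction $L$. Hence the nullspace-form feasible set is $\cF=\cK\cap(C+L)$, which is nonempty precisely when $C/L\in\cK/L$.

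Next I would identify the quotient space $\bE/L$ with the Euclidean space $L^{\perp}$, equipped with the inner product inherited from $\bE$, and set $\cM:=P_{L^{\perp}}$, the orthogonal projection of $\bE$ onto $L^{\perp}$. This map is linear and surjective onto $\bE/L\cong L^{\perp}$, has $\ker\cM=L$, and its adjoint $\cM^{*}$ is the inclusion $L^{\perp}\hookrightarrow\bE$. With these identifications one has $\cM(\cK)=\cK/L$ and $\cM(C)=C/L$, and, since $\cM(X)=\cM(C)\iff X-C\in\ker\cM=L$, the preimage description gives
\[
\{\,X\in\cK:\cM(X)=C/L\,\}=\cK\cap(C+L)=\cF .
\]
In other words, the nullspace-form system is literally the standard-form system \eqref{eq:feasset} for the data $(\cK,\cM,b)=(\cK,P_{L^{\perp}},C/L)$.

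Having set this up, the equality between the singularity degree of $\cF$ and that of $\face(C/L,\cK/L)$ is an immediate application of \Cref{cor:sing1} (equivalently, the three-part \Cref{thm:singledeg}) to $\cM=P_{L^{\perp}}$. For the remaining claim — that these also equal the singularity degree computed intrinsically on the nullspace form — I would note that, as $\cM^{*}$ is the inclusion of $L^{\perp}$, an exposing vector $\cM^{*}v$ for a face of $\cK$ is precisely a vector $d\in\cK^{*}\cap L^{\perp}$ with $\langle d,C\rangle=0$, which is exactly a facial-reduction certificate for the affine section $\cK\cap(C+L)$. \Cref{lem:expose} then makes the correspondence $v\leftrightarrow\cM^{*}v$ between certificates in $\bE/L$ and in $\bE$ bijective at each step, so the two reduction sequences have equal length.

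The only genuine obstacle is bookkeeping rather than mathematics: one must make the inner-product structure on the quotient $\bE/L$ precise (via the identification with $L^{\perp}$) so that the dual cone $(\cK/L)^{*}$, the adjoint $\cM^{*}$, and the notion of exposing vector used in \Cref{thm:singledeg} all transport correctly, and then check that strict feasibility of the nullspace form corresponds to Slater's condition for the projected problem. Once the projection $\cM=P_{L^{\perp}}$ is fixed, every hypothesis of \Cref{cor:sing1} holds verbatim and the corollary follows.
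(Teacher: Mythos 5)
Your proposal is correct and follows essentially the same route as the paper: both reduce the nullspace form to Theorem~\ref{thm:singledeg}/Corollary~\ref{cor:sing1} by identifying the quotient cone $\cK/L$ with the image of $\cK$ under a surjective linear map with kernel $L$. The only cosmetic difference is that you take that map to be the orthogonal projection onto $L^{\perp}$, whereas the paper works with the original constraint map $\cM$ and verifies directly that it induces an isomorphism $\cK/L\to\cM(\cK)$; your choice makes the adjoint (the inclusion of $L^{\perp}$) and the step-by-step correspondence of exposing vectors slightly more explicit.
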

\begin{proof}
The first part of this theorem is just a reformulation of Theorem~\ref{thm:singledeg} in the nullspace form,  and the proof is similar to the proof of Theorem~\ref{thm:singledeg}.

For the second part, we just need to show there is an isomorphism between the two convex cones $\mathcal{M}(\cK)$ and $\cK / L$. For any $X$ such that $\mathcal{M}(X) = b$, $X$ can be written as $X = \cA^*y + C$ where $\mathcal{M}(\cA^* y) =0$ and $\mathcal{M}(C) = b$. The isomorphism is then given by $\mathcal{M}: \cK / L \rightarrow \mathcal{M}(\cK)$

Let $\bar{p}, \bar{q}$ be two distinct points on $\cK / L$ such that $\bar{p} = p + \cA^*y_1$ and $\bar{q} = q + \cA^*y_2$ such that $\mathcal{M}(p -q) \neq 0$. Then $\mathcal{M}(\bar{p}-\bar{q}) = \mathcal{M}(p-q) \neq 0$. So the homomorphism $\mathcal{M}$ from $\cK / L$ to $\mathcal{\cK}$ is one-to-one. It is easy to see it is also an onto map. Therefore it is an isomorphism.
\end{proof}



\begin{remark}
By combining Lemma~\ref{lem:min}, Proposition~\ref{prop:min} and Corollary~\ref{cor:sing1}, we can show that the minimum number of facial reduction in Algorithm~\ref{algo:frnonexp} is achieved by taking $d_i = \cM^*v_i \in \ri(\cF_i^* \cap \{ b\}^{\perp})$ at each step, a result which is proven in its nullspace form in ~\cite{lourencco2018facial}.
\end{remark}


\section{Bounding singularity degree for generic frameworks}
We begin with the definition of frameworks.
\begin{definition}\label{def:framework}
    A configuration $p: V \rightarrow \mathbb{R}^d$ of a graph $G = (V, E)$ assigns positions in $d$-dimensional Euclidean space to the vertices of $G$. A framework is an ordered pair $(G,p)$ consisting of a graph $G$ and a configuration $p$.
\end{definition}
Given a framework, one can define its stress and stress matrices. 
\begin{definition}
    A stress is an assignment $\omega: E \rightarrow \mathbb{R}$ of weights to the edges of $(G,p)$. An equilibrium stress is a stress such that 
    \[
    \sum_{j:v_iv_j \in E} \omega_{ij}(p(v_i) - p(v_j)) = 0.
    \]
    for all $v_i \in V$; i.e., the net stress at each vertex is 0.
\end{definition}
\begin{definition}
Given a stress $\omega$ for $(G, p)$ in $\mathbb{R}^d$, we define $\Omega = \Omega(\omega)$ to be the $|V| \times |V|$ symmetric matrix with
off-diagonal entries $-\omega_{ij}$ and diagonal entries $\sum_{j} \omega_{ij}$.
\end{definition}

In~\cite{connelly2015iterative}, the connection between facial reduction and level of PSD stress matrices is established. The sequence of stress matrices computed at each step of facial reduction algorithm provide a certificate of dimensional rigidity and universal rigidity. The minimum number of the level of stress matrices is indeed the singularity degree of the corresponding SDP optimization problem. 
See also the book~\cite{MR3887551}. We call the \emph{singularity degree of a framework} to be the minimal number of level stress matrices it has

So far, little has been known about the singularity degree of frameworks underlying a particular graph $G$. In~\cite{connelly2015iterative} the stress matrices and its connection to facial reduction is studied in the nullspace form. In~\cite{DrPaWo:14}, it is shown that the singularity degree of the matrix completion problem for chordal graphs is at most 1. We show that stress matrices are exactly the solutions to the auxiliary problem of the matrix completion problem in its nullspace form. Therefore, a framework underlying a chordal graph can have at most 1 level of stress matrices.

Given a framework $(G,p)$ where $G = (V,E)$, $|V|=n$,
let $d_{ij} = \| p_i - p_j \|^2, p_i\in \R^r, \, 
\forall i,j \in E$.  We let $d_E = \Pi_E (d)\in \R^E$. 
Let $\hat X$ be a (centered) feasible solution
for the following EDM completion problem:
\begin{equation}\label{prob:edmcomple}
	\mbox{find\; } \hat X \in \Ss_{+}^n \mbox{\;such that }
	\Pi_E(K(\hat X)) = d_E,\, \hat Xe = 0.
\end{equation}
Here $K(\hat X)$ is an EDM, a solution of the EDM completion problem, where $K(\hat X)_{ij} = X_{ii}+X_{jj} - 2 X_{ij}$.
If in addition $\hat X$ is a  \underline{unique} feasible solution,
then the framework $(G,p)$ is called \textdef{universally rigid}.
Necessarily, $\rank(\hat X)\leq r$.
We note that $\hat Xe=0$ and $\hat X=PP^T=PQQ^TP^T$, 
for any $Q$ orthogonal. Thus
the points are centered, $P^Te=0$, but we can allow rotations using $Q$.

The auxiliary problem from theorem of the alternative, 
that identifies positive singularity degree for \cref{prob:edmcomple}, 
is the following:
\[
0\neq Z = K^*(\Pi_E^*(v) )\succeq 0, \; \langle v, d \rangle = 0.
\]
The singularity degree of problem~\cref{prob:edmcomple} is called the \emph{singularity degree} of EDM completion.

In fact we can reformulate the 
EDM completion problem \cref{prob:edmcomple}
in the nullspace formulation.
We use the feasible $\hat X \in \Ss_+^n$ given above.
Now the problem is equivalent to finding $y$ such that 
\begin{equation}\label{prob:stress}
\hat X + \cN (y) = 
J \hat X J+ \sum_{ij\in E^c}y_{ij} J E_{ij}J \succeq 0,
\end{equation}
where we note that $\range(\cN) = \Null(\Pi_E\circ K)\cap \Ss_c^n$.

The auxiliary problem (theorem of the alternative) for
\cref{prob:stress} is 

\[
\begin{array}{rcll}
\langle \bar{\Omega},J\hat{X}J\rangle &=& 0 \\
\langle J E_{ij} J, \bar{\Omega} \rangle &=& 0, \forall ij \in E^c\\
\bar{\Omega} &\in&  \Ss_+^n  & \qquad \qquad 
\end{array}
\]
The centered solution $\Omega = \bar{J \Omega J}$ can be shown to be a PSD
equilibrium stress matrix. In the following, we will just refer PSD
equilibrium stress matrix as \emph{PSD stress matrix}. Since both $\hat{X}$ and $\Omega$ is centered, we have
\begin{equation}\label{eqn:rankXO}
    \rank(\hat{X}) + \rank (\Omega) \leq n -1.
\end{equation}

By doing facial reductions on problem~\ref{prob:stress}, we can obtain  a sequence of PSD stress matrices $\Omega_1, \Omega_2,...,\Omega_k$ by using the previous PSD stress matrix to reduce the problem to an equivalent problem over a smaller SDP cone, and the following relation holds. 
\begin{equation}\label{eqn:rankseq}
    \rank(\hat{X}) + \rank (\Omega_1)+...,+ \rank(\Omega_k) \leq n -1.
\end{equation}
\begin{lemma}\label{lem:edmcomple}
Given a framework $(G,p)$, The minimal number of (nonzero) PSD stress matrices of $(G,p)$ is equal to the singularity degree of the corresponding EDM completion problem.
\end{lemma}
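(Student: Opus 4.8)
The plan is to show that the facial reduction procedure applied to the nullspace formulation~\cref{prob:stress} of the EDM completion problem has steps that correspond exactly to PSD stress matrices, so that the minimal number of facial reduction steps (the singularity degree) equals the minimal number of nonzero PSD stress matrices. The key is that the auxiliary problem (theorem of the alternative) for~\cref{prob:stress} produces precisely a PSD equilibrium stress matrix as its certificate, and conversely every nonzero PSD stress matrix solves this auxiliary system.

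First I would establish the single-step correspondence carefully. Given the feasible $\hat X$ for~\cref{prob:stress}, the auxiliary system written above asks for $\bar\Omega \in \Ss_+^n$ satisfying $\langle \bar\Omega, J\hat X J\rangle = 0$ and $\langle JE_{ij}J, \bar\Omega\rangle = 0$ for all $ij \in E^c$. I would verify that setting $\Omega = J\bar\Omega J$ yields a centered matrix whose defining orthogonality conditions are exactly the equilibrium stress equations $\sum_{j: v_iv_j \in E}\omega_{ij}(p(v_i)-p(v_j)) = 0$; this is where the structure of $\cN$ and the identity $\range(\cN) = \Null(\Pi_E \circ K)\cap \Ss_c^n$ does the work, since orthogonality to $\range(\cN)$ translates the constraint support to edges in $E$ and forces the equilibrium condition on the off-diagonal weights. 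Conversely, any nonzero PSD stress matrix $\Omega$ satisfies these orthogonality relations and hence is a valid exposing vector / facial reduction certificate. Thus a nonzero certificate exists for~\cref{prob:stress} \emph{if and only if} a nonzero PSD stress matrix exists, which is the base case of the induction.

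Next I would run the induction on the number of facial reduction steps. Having found $\Omega_1$, the facial reduction replaces $\Snp$ by the smaller face $\Snp \cap \Omega_1^{\perp}$, producing an equivalent problem over a smaller SDP cone (of the form $\Ss_+^{n'}$ after an orthogonal change of basis). The essential point is that this reduced problem is again an EDM-completion-type feasibility problem whose auxiliary system again has PSD stress matrices as its certificates, so the single-step correspondence applies verbatim at each stage. Repeating, the sequence of facial reduction certificates $\Omega_1, \Omega_2, \ldots, \Omega_k$ is exactly a sequence of nonzero PSD stress matrices satisfying the rank bound~\cref{eqn:rankseq}, and the algorithm terminates precisely when no nonzero PSD stress matrix remains, i.e., when strict feasibility is attained over the current face. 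By the definition of singularity degree as the minimal length of such a reducing sequence, together with the greedy optimality furnished by \Cref{prop:min} (equivalently \Cref{lem:min}), the minimal number of steps equals the minimal number of nonzero PSD stress matrices.

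The main obstacle I anticipate is the faithful verification that the facial-reduced subproblem retains the EDM-completion structure so that the induction step is legitimate — concretely, that after restricting to $\Snp \cap \Omega_1^{\perp}$ the remaining auxiliary certificates are still centered PSD stress matrices for the \emph{same} framework (on an appropriately reduced configuration) rather than some unrelated cone certificates. This requires tracking how the centering operator $J$, the map $\cN$, and the complementarity $\langle \Omega_i, \hat X\rangle = 0$ interact under the face restriction, and checking that the equilibrium-stress interpretation survives the change of basis. Once this structural persistence is confirmed, the equality of the two minimal counts follows directly from \Cref{thm:singledeg} and its corollaries applied in the nullspace form.
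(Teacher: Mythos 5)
Your proposal is correct and follows essentially the same route as the paper: identify the certificates of the auxiliary system for the nullspace formulation \cref{prob:stress} with centered PSD equilibrium stress matrices, iterate this through the facial reduction steps, and invoke the equivalence of singularity degree between the original and nullspace forms (\Cref{cor:sing2}). The paper's own proof is just a two-line appeal to these facts (which it asserts in the surrounding discussion), so your version simply supplies the single-step correspondence and the inductive bookkeeping in more detail.
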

\begin{proof}
The singularity degree of framework $(G,p)$ is the singularity degree of problem~\ref{prob:stress}, which is just the nullspace form of the EDM completion problem~\ref{prob:edmcomple}. Therefore the two singularity degrees are equal. 
\end{proof}

We can also characterize universal rigidity using sequence of PSD stress matrices. Given a framework $(G, p)$ whose affine span has dimension $d$. Then $(G, p)$ is called \emph{dimensionally rigid} if there exists a sequence of PSD stress matrices $\Omega_1, \Omega_2,...,\Omega_k$ such that 
\begin{equation}\label{eqn:rankseq}
    d + \rank (\Omega_1)+...,+ \rank(\Omega_k) = n -1.
\end{equation}
If in addition the member directions $\{ p_i - p_j \}_{\{i,j\} \in E_G}$ do not lie in a conic (there exists no non-trivial affine flex for $(G,p)$), then it is universally rigid~\cite{connelly2015iterative}.

 The singularity degree of the matrix completion problem is closely related to the stability of frameworks. A well-known bound for the forward error (distance to the orgininal solution set) in terms of the backward error (amount of constraints violation) of LMI is given in \cite{sturm2000error} in the form
\begin{equation}\label{eqn:errorbound}
    \mbox{forward error} = O((\mbox{backward error})^{2^{-s}})
\end{equation}
where $s$ is the singularity degree of the LMI.

Given a framework $(G,p)$ whose affine span has dimension $d$, if there exists at most one level of stress matrix $\Omega$ such that $\rank(\Omega) + d = n -1$, and the member directions $\{ p_i - p_j \}_{\{i,j\} \in E_G}$ do not lie in a conic (there exists no non-trivial affine flex for $(G,p)$), the framework is called \emph{super stable}\cite{connelly2002tensegrity,connelly2015iterative,connelly_guest_2022}. 
Hence, super stable framework is just universally rigid frameworks  with at most one level of PSD stress matrix.  It is conjectured in~\cite{alfakih2010universal} and proved in~\cite{gortler2014generic} that generic universal rigid frameworks has singularity at most 1. Hence, for generic frameworks, universally rigidity is equivalent to super stability. 
 




\subsection{Frameworks underlying chordal graphs}

Chordal graphs appear very often in the study of EDM completion problems. A graph is called chordal if any cycle of four or more nodes (vertices) has a chord,
i.e., an edge exists joining any two nodes that are not adjacent in the cycle.

The following fundamental theorem was proved by Bakonyi and Johnson~\cite{bakonyi1995euclidian}.

\begin{theorem}\label{thm:edmchordal}
Let $G$ be a chordal graph, A matrix $A \in \Ss_G^n$ has an EDM completion if and only if for every clique $\chi$ of the graph, the matrix $A_{\chi}$ is a Euclidean distance matrix.
\end{theorem}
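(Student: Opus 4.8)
The plan is to treat the two implications separately, with essentially all the work in the converse. The forward direction is immediate: if $A$ has an EDM completion $D \in \cEn$, realize $D$ by points $p_1,\dots,p_n \in \R^{n-1}$ with $D_{ij} = \|p_i - p_j\|^2$; for any clique $\chi$ every entry of the block $A_\chi$ is specified, so $A_\chi = D_\chi$ is exactly the squared-distance matrix of the subconfiguration $\{p_i\}_{i \in \chi}$, whence $A_\chi \in \cE^{|\chi|}$. I would state this in a line and concentrate on the reverse implication.

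For the converse I would induct on the number of vertices $n$, using the recursive structure of chordal graphs. The base case is a complete graph, where $A$ is itself a fully specified clique and is an EDM by hypothesis. If $G$ is chordal but not complete, then by Dirac's theorem it has a \emph{clique separator}: a clique $S$ whose removal disconnects $G$, giving induced subgraphs $G_1, G_2$ with $G_1 \cup G_2 = G$ and $G_1 \cap G_2 = S$, each chordal and on strictly fewer vertices. Every clique of $G_i$ is a clique of $G$, so the every-clique-is-EDM hypothesis passes to the partial matrices $A^{(1)}, A^{(2)}$ supported on $G_1, G_2$; by induction each has an EDM completion $D^{(i)}$, which I realize by a point configuration $P^{(i)}$ in Euclidean space.

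The heart of the argument is gluing $P^{(1)}$ and $P^{(2)}$ into one configuration on all $n$ points. Restricted to the shared separator $S$, both configurations realize the \emph{same} matrix $A_S$ (the specified block on the clique $S$), and an EDM determines its configuration up to a rigid motion; hence, after embedding both in a common $\R^{n-1}$, I can apply an orthogonal transformation and translation to $P^{(2)}$ so that its copy of $S$ coincides with the copy of $S$ in $P^{(1)}$. A dimension count---$|G_1| + (|G_2| - |S|) = n$, so the glued configuration needs at most $n-1$ dimensions---guarantees there is room in $\R^{n-1}$ to carry this out while keeping $P^{(1)}$ fixed. The union $P^{(1)} \cup P^{(2)}$ then induces squared distances that match $A$ on every specified edge: edges inside $G_1$ or inside $G_2$ are correct by construction, and because $S$ is a separator there are \emph{no} specified edges between $G_1 \setminus S$ and $G_2 \setminus S$, so the entries the gluing fills in are unconstrained. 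This produces an EDM completion of $A$ and closes the induction.

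I expect the gluing step to be the main obstacle, both in stating precisely that two realizations of a common EDM over $S$ are congruent and in confirming that $\R^{n-1}$ always affords enough room to align $S$ and position the attached block without perturbing the already-placed distances; controlling the ambient dimension is the technical heart. An alternative I would keep in reserve is to transport everything through the Lindenstrauss map $K$, under which $\cEn$ is linearly isomorphic to the centered cone $\Ss_+^{n-1}$, and then appeal to the chordal positive semidefinite completion theorem of Grone--Johnson--S\'a--Wolkowicz; there the difficulty migrates to matching the sparsity pattern of $A$ with that of the Gram matrix under $K$ and handling the centering constraint $\hat X e = 0$, which I would verify directly.
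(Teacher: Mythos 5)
The paper does not actually prove this statement: it is quoted verbatim from Bakonyi and Johnson \cite{bakonyi1995euclidian}, so there is no in-paper argument to compare against. Your proof is, however, a correct and essentially self-contained route to the result: the forward direction is indeed a one-liner, and the converse via Dirac's clique-separator theorem, induction on the two chordal pieces $G_1, G_2$ with $G_1 \cap G_2 = S$, and gluing the two realizations along the common clique $S$ is sound. The one point worth tightening is that the issue you single out as the ``technical heart''---fitting everything into $\R^{n-1}$---is actually a non-issue: you may embed $P^{(1)}$ and $P^{(2)}$ in $\R^{d_1+d_2}$ (or any sufficiently large ambient space), use the standard fact that two realizations of the same EDM on $S$ are related by an ambient isometry to carry the $S$-points of $P^{(2)}$ onto those of $P^{(1)}$, and take the union; since $S$ separates, no specified entry crosses from $G_1\setminus S$ to $G_2\setminus S$, and the resulting $n\times n$ squared-distance matrix is an EDM regardless of the ambient dimension (every EDM re-embeds in $\R^{n-1}$ automatically). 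Two small housekeeping items: handle the case of a disconnected $G$, where the separator $S$ may be empty and the congruence step is vacuous, and note that each $G_i$ has strictly fewer vertices so the induction terminates. For the record, the original Bakonyi--Johnson argument (mirroring Grone--Johnson--S\'a--Wolkowicz for the PSD case) instead completes one missing entry at a time along a chordal sequence of supergraphs; your clique-sum gluing is a genuinely different but equally standard decomposition, and arguably cleaner since it avoids the single-entry completion lemma. Your fallback of transporting the problem through the Lindenstrauss map $K$ to the centered PSD cone would also work, but then you must check that the sparsity pattern survives the map, which is more delicate than the direct geometric gluing.
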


The following theorem about the singularity of EDM completions of chordal graphs was proved in~\cite{DrPaWo:14}. 
\begin{theorem}\label{edm:chordal}
For chordal graphs, the singularity degree of the EDM completion problem is at most 1. 
\end{theorem}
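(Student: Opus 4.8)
The statement to establish is \Cref{edm:chordal}: for a chordal graph $G$, the singularity degree of the EDM completion problem \eqref{prob:edmcomple} is at most $1$. The plan is to combine the combinatorial clique characterization of \Cref{thm:edmchordal} with the facial-reduction framework developed in \Cref{thm:singledeg} and \Cref{cor:sing1}. By \Cref{cor:sing1}, the singularity degree of the feasibility system $\cF = \{X \in \Snp : \Pi_E(K(X)) = d_E,\ Xe=0\}$ equals the singularity degree of $\face(b, \cM(\cK))$ in the image space, where $\cM = \Pi_E \circ K$ restricted to the centered cone. Thus it suffices to show that this minimal face can be exposed in a single facial-reduction step, i.e., that there is one exposing vector $v \in \cM(\cK)^*$ with $\langle v, d_E\rangle = 0$ whose pullback $\cM^* v$ exposes $\face(\cF, \cK)$ directly.

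First I would set up the correspondence between cliques and principal submatrices. By \Cref{thm:edmchordal}, feasibility of the partial matrix is governed clique-by-clique: $A$ has an EDM completion iff each clique block $A_\chi$ is an EDM. Since a chordal graph admits a perfect elimination ordering and its maximal cliques satisfy the running-intersection (clique-tree) property, I would use this tree structure to patch together local certificates. The key step is to show that the \emph{single} exposing vector can be assembled from the individual cliques: on each maximal clique $\chi$ the feasible EDM block lies in a face of $\Ss_+^{|\chi|}$ exposed by one vector (the PSD cone is facially exposed, as recorded in \Cref{subsec:notation}), and the running-intersection property guarantees these clique-local exposing vectors are mutually consistent on overlaps and can be combined into a global $Z = K^*(\Pi_E^* v) \succeq 0$ satisfying $\langle v, d\rangle = 0$ that exposes exactly $\face(\cF,\Snp)$.

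Concretely, after computing $\face(\hat X, \Snp) = \Snp \cap (VV^T)^\perp$ via the spectral decomposition of a feasible $\hat X$ (using the facial-exposedness formula for $\Snp$), I would verify that chordality forces the single pullback $\cM^*v$ to cut $\Snp$ down to this face in one step, so no second facial-reduction iteration is possible by the termination criterion of \Cref{algo:frnonexp}. Equivalently, in the stress-matrix language of \Cref{lem:edmcomple}, I would show that chordality yields a single PSD stress matrix $\Omega$ whose nullspace already certifies the minimal face, so that the sequence in \eqref{eqn:rankseq} has length one.

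The main obstacle I anticipate is the \emph{gluing} step: showing that clique-local exposing vectors, each valid on its own principal submatrix, extend to a \emph{globally} valid exposing vector in $\cM(\cK)^*$ rather than merely a collection of local certificates. This requires the running-intersection property of the clique tree to control the behavior on the separators (the clique overlaps), ensuring the combined vector remains in the dual cone and annihilates $d_E$ simultaneously. Handling the centering constraint $Xe=0$ (equivalently, working modulo the space $\Ss_c^n$ and the projector $J_n$ appearing in $K^\dagger$) adds bookkeeping but should not alter the count, since centering is a fixed linear reduction applied uniformly.
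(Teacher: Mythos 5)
First, note that the paper does not actually prove \Cref{edm:chordal}: it is quoted as a known result from \cite{DrPaWo:14}, so there is no internal proof to compare yours against, and your text is really an attempted reconstruction of the argument in that reference. Your skeleton --- reduce via \Cref{cor:sing1} to exposing $\face(d_E,\cM(\cK))$ in one step, invoke the Bakonyi--Johnson clique characterization (\Cref{thm:edmchordal}), and assemble clique-local certificates into a single global exposing vector of the form $Z=K^*(\Pi_E^*v)$ using the clique tree of the chordal graph --- is the right outline and is consistent with the route taken in \cite{DrPaWo:14}.

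There is, however, a genuine gap, and you have located the difficulty in the wrong place. Combining clique-local certificates into one element of the dual cone is essentially free: each maximal clique $\chi$ is complete in $G$, so its local exposing vector is supported on edges of $E$, its zero-padding to an $n\times n$ matrix remains positive semidefinite, and the sum over all cliques is again of the form $K^*(\Pi_E^*v)$ with $\langle v,d_E\rangle=0$, simply because exposing vectors add. No running-intersection ``consistency on separators'' is needed for that step, and what it yields unconditionally is only \emph{some} exposed face of $\Snp$ containing $\cF$ after one reduction. The entire mathematical content of the theorem is the \emph{minimality} of that face: one must exhibit a feasible completion $\hat X$ whose range is exactly the common nullspace of the summed certificate, equivalently show that the ranks of the clique-local faces accumulate correctly so that the termination test of \Cref{algo:frnonexp} is met after one iteration. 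This is precisely where chordality enters (via a perfect elimination ordering and an inductive maximum-rank completion, clique by clique), and your proposal only asserts this step (``verify that chordality forces the single pullback to cut $\Snp$ down to this face'') without an argument. As written, the plan establishes that at least one facial reduction step is available, not that one step suffices, so the bound of \Cref{edm:chordal} is not yet proved.
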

We now have the following conclusion about the level of stress matrices of a framework underlying chordal graphs.

\begin{corollary}
Given a framework $(G,p)$. If $G$ is chordal, then the level of PSD stress matrices is at most 1. 
\end{corollary}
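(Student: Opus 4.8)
The plan is to chain together three facts already established in the excerpt. First I would invoke \Cref{thm:edmchordal}, the Bakonyi--Johnson theorem, together with \Cref{edm:chordal} to conclude that for a chordal graph $G$, the EDM completion problem \cref{prob:edmcomple} associated to the framework $(G,p)$ has singularity degree at most $1$. This is the analytic content of the statement; the remaining work is purely a matter of translating between the two notions of complexity (singularity degree of the completion problem versus number of levels of PSD stress matrices).

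The translation is supplied by \Cref{lem:edmcomple}, which states that the minimal number of nonzero PSD stress matrices of $(G,p)$ equals the singularity degree of the corresponding EDM completion problem. So I would simply write: by \Cref{lem:edmcomple} the minimal number of PSD stress matrices equals the singularity degree of \cref{prob:edmcomple}, and by \Cref{edm:chordal} the latter is at most $1$ when $G$ is chordal; hence the number of levels of PSD stress matrices is at most $1$. That is essentially the entire proof — it is a corollary in the literal sense, assembled from the preceding lemma and theorem.

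There is essentially no hard step here, since all the substantive work was done upstream: the identification of stress matrices with auxiliary-problem solutions in the nullspace formulation (the discussion around \cref{prob:stress}) underlies \Cref{lem:edmcomple}, and the chordality bound is imported from \cite{DrPaWo:14} as \Cref{edm:chordal}. The only point requiring a moment's care is making sure the framework's EDM completion problem is genuinely an instance of \cref{prob:edmcomple} — i.e.\ that the given configuration $p$ furnishes a feasible (centered) $\hat X\in\Ss_+^n$ so that the singularity degree is well-defined — but this is immediate from the construction $\hat X = PP^T$ with $P^Te=0$ described just before \cref{prob:edmcomple}. Accordingly I would present the following short argument.

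\begin{proof}
Let $(G,p)$ be a framework with $G$ chordal. By the construction preceding \cref{prob:edmcomple}, the configuration $p$ yields a centered feasible solution $\hat X = PP^T\in\Ss_+^n$, so the EDM completion problem \cref{prob:edmcomple} for $(G,p)$ is well defined and its singularity degree is meaningful. By \Cref{edm:chordal}, since $G$ is chordal, the singularity degree of this EDM completion problem is at most $1$. By \Cref{lem:edmcomple}, the minimal number of (nonzero) PSD stress matrices of $(G,p)$ equals the singularity degree of the corresponding EDM completion problem. Combining these two facts, the number of levels of PSD stress matrices of $(G,p)$ is at most $1$.
\end{proof}
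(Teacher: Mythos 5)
Your proof is correct and follows essentially the same route as the paper, which simply cites \Cref{lem:edmcomple} together with the chordal singularity-degree bound (the paper's one-line proof actually cites \Cref{thm:edmchordal}, but the substantive ingredient is \Cref{edm:chordal}, which you correctly identify). Your added remark that $\hat X = PP^T$ furnishes a feasible centered point so the singularity degree is well defined is a harmless extra precaution not present in the paper.
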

 \begin{proof}
Consequence of Lemma~\ref{lem:edmcomple} and Theorem~\ref{thm:edmchordal}.
 \end{proof}

\subsection{Frameworks where the underlying graph has a Laman subgraph}

Laman graphs have played in the study of rigidity theory. In particular, by restricting ourselves within generic configurations, a framework is minimally rigid if and only if the underlying graph is Laman\cite{laman1970graphs,connelly_guest_2022}.  

We start with the formal definition of Laman graphs.
\begin{definition}[Laman graphs] A graph $G = (V, E)$ is Laman if $|E| = 2|V| -3$ and every subgraph of $G$  with $k$ vertices has at most $2k -3$ edges. 
\end{definition}
\begin{definition}
Let $p$ be a configuration of $n$ points in $\mathbb{R}^d$. We say that $p$ is generic if the
coordinates of $p$ do not satisfy any polynomial with rational coefficients.
\end{definition} 
\begin{theorem}\label{thm:lamangeneric}
Let $G = (V,E)$ be a Laman-sparse graph with $n$ vertices, let $p = \{p_1,...,p_n \}$ be a 2d configuration of $G$, let $d_{ij} = \|p_i - p_j\|^2,\, \forall ij \in E$ be the squared distance between $p_i$ and $p_j$ and $d_E$ be a point in $\mathbb{R}^E$ whose coordinates are squared distances in the edge set $E$. If $p$ is generic, then $d_E$ is also generic.
\end{theorem}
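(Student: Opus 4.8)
The plan is to regard the squared edge lengths as a polynomial map, to translate the desired genericity into algebraic independence of its coordinate functions, and to settle that algebraic independence by the Jacobian criterion together with Laman's theorem. Write $x = (x_1,\ldots,x_n)$ for formal $2$-dimensional coordinate vectors and set, for each edge $ij \in E$, the polynomial $f_{ij}(x) = \|x_i - x_j\|^2 \in \mathbb{Q}[x]$; these have integer coefficients. The map $\Phi \colon (\mathbb{R}^2)^n \to \mathbb{R}^E$, $\Phi(x) = (f_{ij}(x))_{ij \in E}$, sends the configuration $p$ to $d_E$, and by the definition of genericity in force, the assertion ``$d_E$ is generic'' is exactly the statement that the $|E|$ numbers $d_{ij} = f_{ij}(p)$ are algebraically independent over $\mathbb{Q}$.

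First I would reduce this statement about numbers to one about polynomials. If the numbers $d_{ij}$ satisfied a nonzero relation $P(\{d_{ij}\}) = 0$ with $P \in \mathbb{Q}[\,\cdot\,]$, then $P(\{f_{ij}(x)\})$ would be a nonzero element of $\mathbb{Q}[x]$ vanishing at $x = p$, which is impossible because the coordinates of $p$ are algebraically independent over $\mathbb{Q}$; hence $P(\{f_{ij}(x)\}) \equiv 0$, so the polynomials $f_{ij}$ are themselves algebraically dependent over $\mathbb{Q}$. Contrapositively, it suffices to show that the $f_{ij}$, $ij \in E$, are algebraically independent as elements of $\mathbb{Q}[x]$. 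By the Jacobian criterion for algebraic independence (valid in characteristic zero), this is equivalent to the Jacobian matrix $\bigl(\partial f_{ij}/\partial x_{k,l}\bigr)$ having rank $|E|$ over the function field $\mathbb{Q}(x)$.

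Next I would identify this Jacobian with the rigidity matrix. Since $\partial f_{ij}/\partial x_{i,l} = 2(x_{i,l} - x_{j,l})$ and $\partial f_{ij}/\partial x_{j,l} = -2(x_{i,l}-x_{j,l})$, the Jacobian of $\Phi$ equals $2R(x)$, where $R(x)$ is the usual $|E| \times 2n$ rigidity matrix whose row for $ij$ carries the block $(x_i - x_j)$ in the columns of vertex $i$, the block $(x_j - x_i)$ in the columns of vertex $j$, and zeros elsewhere. Thus I must show $R(x)$ has full row rank $|E|$ over $\mathbb{Q}(x)$; equivalently, that the rows $\{R_{ij}\}_{ij \in E}$ are linearly independent at a generic configuration, i.e.\ that $E$ is an independent set of the $2$-dimensional generic rigidity matroid. (Since $p$ has algebraically independent coordinates, no integer-coefficient minor of $R$ vanishes at $p$ unless it vanishes identically, so $\rank R(p)$ already realizes this generic rank.)

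The main obstacle, and the only nonelementary ingredient, is precisely this independence claim, which is the content of Laman's theorem: the independent sets of the $2$-dimensional generic rigidity matroid are exactly the $(2,3)$-sparse edge sets, i.e.\ those $E$ for which every $k$-vertex subgraph spans at most $2k-3$ edges — which is the Laman-sparse hypothesis. Invoking the hard ``sparsity implies independence'' direction of~\cite{laman1970graphs} gives $\rank R(p) = |E|$, so the Jacobian of $\Phi$ has full row rank, and the criterion of the second paragraph yields that the $f_{ij}$ are algebraically independent over $\mathbb{Q}$; by the specialization reduction the squared distances $d_{ij}$, $ij \in E$, are then algebraically independent over $\mathbb{Q}$, that is, $d_E$ is generic. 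I would note in passing that the converse necessity of the sparsity count is elementary — the rows on any $k$ affinely spanning vertices lie in a space of dimension at most $2k-3$ because the rigidity matrix annihilates the $3$-dimensional space of infinitesimal Euclidean motions — but this direction is not needed here.
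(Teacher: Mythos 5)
Your proof is correct, and its first half --- pulling a hypothetical nonzero polynomial relation $P(\{d_{ij}\})=0$ back through the squared-distance map to get a rational polynomial vanishing at the generic configuration $p$, hence vanishing identically --- is exactly the paper's opening move. You diverge at the key step. The paper concludes from $g\equiv 0$ that $f$ vanishes on the entire realizable set $D_E$ and on its Zariski closure, then cites the fact that Laman-sparsity makes $E$ independent in the algebraic matroid of the Cayley--Menger ideal $\mathrm{CM}(n,2)$ (via \cite{malic2021combinatorial}), so that the closure has full dimension $|E|$ and a nonzero $f$ in $|E|$ variables cannot vanish on it. You instead recast the claim as algebraic independence of the coordinate functions $f_{ij}$ over $\mathbb{Q}$, verify it by the Jacobian criterion in characteristic zero, identify the Jacobian with twice the rigidity matrix, and invoke the sparsity-implies-independence direction of the Laman/Pollaczek--Geiringer theorem (which the paper itself records later as \Cref{thm:PGlaman}) to get full row rank $|E|$ over $\mathbb{Q}(x)$. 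The two key lemmas are equivalent --- the algebraic matroid of the Cayley--Menger variety coincides in characteristic zero with the linear matroid of the rigidity-matrix rows over the function field, precisely by the Jacobian criterion you use --- but your route makes that equivalence explicit and is more self-contained, relying only on classical combinatorial rigidity, whereas the paper's is shorter because it outsources the dimension count to the cited algebraic-matroid reference. Both arguments ultimately rest on the same combinatorial input, and your closing remark that only the ``hard'' direction of Laman's theorem is needed is accurate.
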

\begin{proof}
Suppose to the contrary that $d_E$ is not generic, then there exists a nonzero polynomial $f$ such that $f(d_1,...,d_{|E|}) = 0$. Hence $f(...,||p_i-p_j||^2,...) = 0$, which we can rewrite it as $g(p_1,...,p_n) = 0 = f(...,||p_i-p_j||^2,...)$. Since $p$ is generic, we know $g \equiv 0$. ($g$ is a zero polynomial)

Since $g \equiv 0$, we know $f(d_1,...,d_{|E|}) =0$ for any point $(d_1,...,d_{|E|})$ where $(d_1,...,d_{|E|})$ are squared distances of some 2D framework $(G,p)$. Let $D_E$ be the set of all those points $(d_1,...,d_{|E|})$. Thus $f \equiv 0$ in the Zariski closure of $D_E$ (over the complex). Since $G$ is Laman-sparse, $E$ is independent in the algebraic matroid of the 2D Cayley-Menger ideal $\text{CM}(n,2)$\cite{malic2021combinatorial}.   Thus the Zariski closure of $D_E$ has dimension $|E|$. Since $f$ has $|E|$ variables, $f \equiv 0$ on a complex variety of  dimension $|E|$ implies $f$ is a zero polynomial, a contradiction. Hence $d_E$ has to be generic.
\end{proof} 


\begin{theorem}\label{thm:lamansingle}
Let $(G, p)$ be a generic framework of $n$ vertices in $\mathbb{R}^2$. If $G$ is Laman-sparse, then the level of stress matrices for $(G,p)$ is zero. In other words, if we are given the set of squared distance $d$ of $(G,p)$ corresponding to the edge set $E$ where $p$ is generic, then we can find a framework $\bar{p}$ of maximal dimension $n-1$ such that $||\bar{p}_i - \bar{p}_j||^2 = d_{ij},\,\forall ij \in E$. 
\end{theorem}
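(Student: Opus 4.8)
The plan is to translate the claim about frameworks into the language of singularity degree developed earlier in the paper, and then invoke \Cref{thm:lamangeneric} together with the error-bound/genericity machinery. Recall that the level of stress matrices of $(G,p)$ equals the singularity degree of the EDM completion problem \cref{prob:edmcomple} by \Cref{lem:edmcomple}, and by \Cref{cor:sing1} this in turn equals the singularity degree of $\face(d_E, \Pi_E(K(\Snp)))$ in the image space $\R^E$. So to show the level of stress matrices is zero, I would show that the EDM completion problem is \emph{strictly feasible}, i.e.\ $d_E$ lies in the relative interior (in fact the interior, within the appropriate image cone) of $\Pi_E(K(\Snp))$, so that no facial reduction step is needed at all.

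First I would use \Cref{thm:lamangeneric}: since $p$ is generic and $G$ is Laman-sparse, the squared-distance vector $d_E$ is generic in $\R^E$. The key geometric fact is that the image cone $\cM(\cK) := \Pi_E(K(\Snp))$ is a full-dimensional convex cone in $\R^E$ precisely because $E$ is independent in the algebraic matroid of the Cayley--Menger ideal $\mathrm{CM}(n,2)$, which is exactly the hypothesis that makes the Zariski closure of the distance set have dimension $|E|$ (used in the proof of \Cref{thm:lamangeneric}). A generic point of a full-dimensional convex set cannot lie on its boundary: the boundary is contained in a proper algebraic (or at least lower-dimensional) subset, and a generic point avoids any such set by definition. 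Hence $d_E \in \Int \cM(\cK)$, which is Slater's condition for the completion problem. By \Cref{lem:thmalt}, strict feasibility means there is no exposing vector $v$ solving the auxiliary system, so the facial reduction \Cref{algo:frnonexp1} terminates immediately and the singularity degree is zero.

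Then by \Cref{cor:sing1} the singularity degree of the feasible system $\cF$ equals that of $\face(d_E, \cM(\cK))$, and since $d_E$ is interior this minimal face is all of $\cM(\cK)$, i.e.\ no reduction occurs; translating back through \Cref{lem:edmcomple} gives that the level of PSD stress matrices for $(G,p)$ is zero. For the second assertion, zero singularity degree means the minimal face containing $\hat X$ in $\Snp$ is all of $\Snp$ (equivalently, there is a feasible $\bar X$ of full rank $n-1$ after centering), so the recovered framework $\bar p$, obtained from a factorization $\bar X = \bar P \bar P^T$ with $\bar X e = 0$, has affine dimension $n-1$ while matching all prescribed distances $\|\bar p_i - \bar p_j\|^2 = d_{ij}$ for $ij\in E$.

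The main obstacle is making rigorous the step that \emph{generic implies interior}, i.e.\ that $d_E$ avoids the boundary $\bdry\, \cM(\cK)$. One must argue that the boundary of this linear image of $\Snp$ is contained in a set of dimension strictly less than $|E|$ (a real-algebraic subset cut out by vanishing of appropriate minors/determinants), so that a point whose coordinates satisfy no rational polynomial cannot land there; this requires care because the linear image of a closed convex cone need not be closed (as emphasized in the introduction and \cite{Pataki:07}), so I would work with the full-dimensionality coming from the matroid independence of $E$ and confirm that the relative boundary is carved out by nontrivial polynomial conditions that a generic $d_E$ evades. Once that is established, the rest is a direct chain of the lemmas and corollaries already proved.
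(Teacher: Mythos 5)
Your proposal is correct and follows essentially the same route as the paper: use Theorem~\ref{thm:lamangeneric} to conclude that $d_E$ is generic, observe that the boundary of the semialgebraic image cone $\Pi_E(K(\Snp))$ is contained in the zero set of a nonzero polynomial over $\mathbb{Q}$ so that a generic point cannot lie on it and the singularity degree of $\face(d_E,\Pi_E(K(\Snp)))$ is zero, then translate back through Corollaries~\ref{cor:sing1} and~\ref{cor:sing2} to get zero levels of PSD stress matrices and a maximal-rank completion. The only difference is presentational: you are more explicit than the paper about why the image cone is full-dimensional in $\R^E$ (the matroid independence of $E$ from the proof of Theorem~\ref{thm:lamangeneric}), which is precisely the point needed to ensure the boundary-defining polynomial is nonzero, so your added care strengthens rather than changes the argument.
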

\begin{proof}
Since $(G,p)$ is a generic framework in $\mathbb{R}^2$, let $X_p$ be the corresponding gram matrix of $p$, by Theorem~\ref{thm:lamangeneric}, we know $d_E = \Pi_E(K(X_p))$ is also generic. Since $d_E$ is generic, we know the singularity degree of $\face(d_E, \Pi_E(K(\Ss_+^n)))$ is zero. Otherwise, suppose the singularity degree of $\face(d_E, \Pi_E(K(\Ss_+^n)))$ is 1 or more, then $d_E$ must lie on the boundary of $\Pi_E(K(\Ss_+^n)))$ . However, since $\Ss_+^n$ is a semialgebraic set, $\Pi_E(K(\Ss_+^n)))$ is also semialgebraic,  any point on the boundary of $\Pi_E(K(\Ss_+^n)))$  will be in the zero set of some polynomial $p(X)$ over $Q[X]$, therefore it must be non-generic, a contradiction. Therefore the singularity degree of $\face(d_E, \Pi_E(K(\Ss_+^n)))$ is zero. Thus by Corollary~\ref{cor:sing1} and Corollary~\ref{cor:sing2}, we know $(G,p)$ does not have a nonzero PSD stress matrix, hence the corresponding EDM completion problem admits a maximal rank solution $\bar{p}$.
\end{proof}

We now study the singularity degree of generic framework $(G,p)$ where $G$ is a Laman graph plus one edge. To begin with, we introduce the following lemma.
\begin{lemma}\label{lem:polyd}
 Let $f$ be a polynomial in $Q[x_1,...,x_n]$. If there exist $x, y$ with $x \neq y$ such that 
 \[
f(x) = f(y) = 0 \implies f(\lambda x +(1 - \lambda)y) = 0, \quad \forall \lambda \in [0,1]
\]
Then 
\[
f(\lambda x +(1 - \lambda)y) = 0, \quad \forall \lambda \in \mathbb{R}
\]
In other words, if the zero set of a polynomial contains a face, then it must contain the whole affine span of the face.
\end{lemma}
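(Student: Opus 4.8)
The plan is to reduce this multivariate statement to an elementary one-variable fact, after first reading the hypothesis in the way the informal restatement intends: namely, that $f$ vanishes on the entire segment $\{\lambda x + (1-\lambda)y : \lambda \in [0,1]\}$ (which in particular forces $f(x)=f(y)=0$ at the endpoints $\lambda = 1$ and $\lambda = 0$). The conclusion to reach is that $f$ then vanishes on the whole affine line through $x$ and $y$.

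First I would parametrize the line by a single real variable, defining $g(\lambda) := f(\lambda x + (1-\lambda)y)$ for $\lambda \in \mathbb{R}$. Each coordinate of $\lambda x + (1-\lambda)y$ is an affine, hence polynomial, function of $\lambda$, and $f \in \mathbb{Q}[x_1,\dots,x_n]$; therefore the composition $g$ is a polynomial in the single variable $\lambda$, of degree at most $\deg f$. The core observation is then that the hypothesis says $g(\lambda) = 0$ for every $\lambda \in [0,1]$. Since $[0,1]$ is an infinite set, $g$ has infinitely many roots, and a nonzero univariate polynomial has only finitely many roots; hence $g$ is the zero polynomial. Consequently $g(\lambda) = 0$ for all $\lambda \in \mathbb{R}$, which is exactly the assertion $f(\lambda x + (1-\lambda)y) = 0$ for all $\lambda \in \mathbb{R}$, giving the geometric statement that the zero set, once it contains the segment, contains the full affine line.

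There is essentially no serious obstacle here; the only point deserving a word of justification is that restricting a polynomial in several variables to an affine line yields a univariate polynomial of finite degree, which follows at once by expanding $f$ and substituting the affine expressions for the coordinates. I note that the rationality of the coefficients plays no role in this argument and could be dropped entirely; it is kept only to match the ambient setting of the surrounding results, where $f$ arises from a polynomial over $\mathbb{Q}[X]$ cutting out the boundary of a semialgebraic image. The generalization from a single segment to an arbitrary face is immediate, since any face is the convex hull of its points and the affine span is spanned by finitely many such segments through a fixed relative-interior point.
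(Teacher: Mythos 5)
Your proof is correct, and it takes a different (and cleaner) route than the paper's. You restrict $f$ to the line through $x$ and $y$, obtaining a univariate polynomial $g(\lambda)=f(\lambda x+(1-\lambda)y)$ of degree at most $\deg f$, observe that $g$ vanishes on the infinite set $[0,1]$, and conclude $g\equiv 0$ from the fact that a nonzero univariate polynomial has only finitely many roots. The paper instead fixes the midpoint $z_0=\tfrac12(x+y)$, argues that all directional derivatives $\nabla_d^{(i)}f(z_0)$ along $d=x-y$ vanish (since $f$ is identically zero on a relatively open piece of the segment around $z_0$), and then invokes the finite Taylor expansion of the polynomial $f$ along that direction to conclude $f(z)=0$ for any $z$ on the line. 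The two arguments rest on essentially the same underlying fact --- the coefficients of your $g$ at $\lambda=\tfrac12$ \emph{are} the paper's directional derivatives --- but your packaging avoids any differentiation and any appeal to Taylor's theorem with remainder, replacing it with the single statement that a univariate polynomial with infinitely many roots is zero. That makes your version shorter and less error-prone (the paper's write-up is slightly loose about why $\nabla_d^{(i)}f(z_0)=0$ for all $i$, which is exactly the point your restriction-to-a-line step handles transparently). Your observations that the hypothesis should be read as ``$f$ vanishes on the whole segment'' and that rationality of the coefficients is irrelevant are both apt; the paper implicitly makes the same reading.
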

\begin{proof}
 Without loss, let $z = t x + (1 -t) y$ where $t >1$. Let $d = x -y$, and $z_0 = \frac{1}{2} x + \frac{1}{2}y$. So $f(z_0) = 0$ Then we can compute $f(z)$ by the Taylor expansion  at $z_0$ such that $f(z) = f(z_0) + \nabla_d f(z_0) (z - z_0) + \nabla^2_d f(z_0) (z - z_0)^2/2 + ...+ \nabla^{(m)}_d f(\bar{z})(z-z_0)^{m}/m! $.  It is easy to see $\nabla^{(i)}_df(z_0) = 0$ for any $i$. Also since $f$ is a polynomial, we have $\nabla^{(m)}_d f(\bar{z}) = 0$ for $m$ sufficiently large. Hence $f(z) = 0$, the lemma is proved. 
\end{proof}
We now introduce some basic notions from rigidity theory, which will be used later to derive an upper bound for the singularity degree of generic frameworks in $\R^2$ where the underlying graph has a Laman subgraph. 
\begin{definition}\label{def:rigidmtx}
The rigidity matrix of a framework $(G,p)$ in $\R^d$, denoted by $R(G,p)$, is an $|E| \times d|V|$ natrux where each row corresponds to an edge $(v_i,v_j) \in E$ and has the form 
\[
\left[ 
\begin{array}{c}
     \vdots  \\
     0 \cdots 0 \quad p(u) - p(v) \quad 0 \cdots 0 \quad p(v) - p(u) \quad 0 \cdots 0   \\
     \vdots
\end{array}
\right]
\]
where the given row corresponding to the edge $uv$, the d-tuple $p(u) -p(v)$ occurs in the columns for $u$ and the d-tuple $p(v) - p(u)$ occurs in the columns for $v$.
\end{definition}
\begin{definition}
The rigidity map of a framework $(G,p)$ is the map $f_G: \R^{d|V|} \rightarrow \R^{|E|}$ defined by 
\[
f_G(p) = (\cdots, ||p(v_i) - p(v_j)||^2, \cdots)_{(v_i,v_j) \in E},
\]
i.e., the map that returns the squared edge-lengths of a framework.
\end{definition}
Note that the rigidity matrix of a framework $(G,p)$ is the Jacobian matrix of the rigidity map divided by 2.
\begin{definition}
A framework in $\R^d$ is: infinitesimally rigid if $\rank(R(G,p)) =dn - \left( \begin{array}{c}
     d+1 \\
     2
\end{array} \right)$, infinitesimally flexible if $\rank R(G,p) < dn - \left( \begin{array}{c}
     d+1 \\
     2
\end{array} \right)$, independent if the rows of the rigidity matrix  are linearly independent, and minimally infinitesimally rigid if it is independent and infinitesimally rigid.
\end{definition}
Recall the following well-known theorem characterizing generically minimally rigid frameworks in $\R^2$.
\begin{theorem}[Pollaczek-Geiringer~\cite{pollaczek1927}]\label{thm:PGlaman}
A graph $G$ is generically minimally rigid in $\R^2$ if and only if $G$ is a Laman graph.
\end{theorem}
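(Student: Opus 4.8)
The plan is to prove the two implications separately, exploiting the fact that infinitesimal rigidity is a \emph{generic} property: since the entries of the rigidity matrix $R(G,p)$ are polynomials in the coordinates of $p$, the locus of configurations at which $\rank R(G,p)$ attains its maximum is Zariski-open and (nonempty, hence) dense. Consequently, to establish any rank or independence statement for generic $p$ it suffices to verify it at a single conveniently chosen configuration. Throughout, $d=2$ so the target rank is $2n-\binom{3}{2}=2n-3$, and I will use the standing fact that, for any configuration affinely spanning $\R^2$, the trivial infinitesimal motions (two translations and one rotation) furnish a $3$-dimensional subspace of $\ker R(G,p)$, so that $\rank R(G,p)\le 2n-3$ always.

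For the necessity direction, suppose $(G,p)$ is independent and infinitesimally rigid at generic $p$. Independence gives $\rank R(G,p)=|E|$, and infinitesimal rigidity gives $\rank R(G,p)=2n-3$, so $|E|=2n-3$. For any subgraph $H$ on $k$ vertices, the rows of $R(G,p)$ indexed by $E(H)$ have nonzero entries only in the columns of vertices of $H$; deleting the zero columns identifies this row-set with $R(H,p|_H)$. Being a subset of an independent set, these rows are independent, so $\rank R(H,p|_H)=|E(H)|$; on the other hand $\rank R(H,p|_H)\le 2k-3$ by the trivial-motion bound (valid at generic $p$, where no three points are collinear). Hence $|E(H)|\le 2k-3$, which together with $|E|=2n-3$ is exactly the Laman condition.

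For the sufficiency direction I would rely on the combinatorial characterization (Henneberg) that every Laman graph is obtainable from the single edge $K_2$ by a sequence of type-$1$ moves (adjoin a degree-$2$ vertex) and type-$2$ moves (the $1$-extension: delete an edge $uv$ and adjoin a new vertex $w$ joined to $u$, $v$, and a third vertex $x$). This combinatorial lemma I would prove by reverse induction: a counting argument ($2|E|=4n-6$) forces a vertex of degree $2$ or $3$, whose inverse move yields a smaller Laman graph. I then check each move preserves generic independence and rigidity. A type-$1$ move placing $w$ off the line of its two neighbours contributes two new rows manifestly independent from each other and from the old rows, and raises both $|E|$ and the bound $2n-3$ by $2$. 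For a type-$2$ move it suffices, by the genericity principle, to exhibit one independent realization of the extended graph: I would place $w$ on the line through $p_u$ and $p_v$, so that the rows for $wu$ and $wv$ are parallel, and then read off the columns indexed by $w$ to force the coefficient of the row $wx$ to vanish and to show that the remaining collinear constraints reproduce exactly the deleted edge $uv$, whence independence follows from that of $G$.

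The main obstacle is precisely this type-$2$ (edge-split) move. Unlike the type-$1$ case, one cannot place $w$ at an arbitrary generic point and simply count, because deleting $uv$ removes a constraint that the three new edges must collectively recover; the argument succeeds only at the carefully chosen \emph{degenerate} collinear placement, and the verification that the three new rows are independent modulo the old ones is where the planar geometry and the specific value $d=2$ enter essentially (in higher dimensions one needs $d+1$ new edges and a more elaborate placement). The accompanying combinatorial lemma is also delicate at a degree-$3$ vertex: among the three candidate edges one may re-insert, at least one must leave the smaller graph Laman, and establishing this requires a short but genuine counting argument.
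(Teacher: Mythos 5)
The paper offers no proof of this statement: it is quoted verbatim as a classical result of Pollaczek--Geiringer (independently Laman) and used as a black box, so there is nothing in the paper to compare your argument against. Judged on its own merits, your outline is the standard and correct route. The necessity direction --- restricting an independent row set to the edges of a subgraph $H$ on $k\ge 2$ vertices and invoking the trivial-motion bound $\rank R(H,p|_H)\le 2k-3$ --- is essentially complete as written. The sufficiency direction via the Henneberg construction is the classical proof, and you have correctly isolated the two places where genuine work remains: (i) the combinatorial lemma that at a degree-$3$ vertex at least one of the three candidate edges can be re-inserted so that the reduced graph is again Laman, which requires a counting argument on (unions of) tight subgraphs containing two of the candidate edges; and (ii) the verification that the collinear placement of the new vertex $w$ in the $1$-extension yields independent rows, where after forcing $\omega_{wx}=0$ one must check that the residual contribution of the rows $wu$ and $wv$ to the columns of $u$ and $v$ is a \emph{nonzero} multiple of the deleted row $uv$ (this needs $p_w$ strictly between, or at least distinct from, $p_u$ and $p_v$), so that independence of $R(G,p)$ kills all coefficients. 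Your appeal to Zariski-openness of the maximal-rank locus to transfer independence from this single degenerate configuration to all generic ones is exactly right. As submitted these two steps are sketches rather than proofs, but the architecture is sound, each gap is fillable by standard arguments, and nothing here affects how the paper uses the theorem (only through the rank value $2n-3$ in the Laman-plus-$d$ propositions).
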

\begin{proposition}\label{prop:laman1}
A generic 2D framework $(G,p)$ where the underlying graph $G$ is a Laman graph plus one edge have singularity degree exactly equal to 1. 
\end{proposition}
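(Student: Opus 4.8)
```latex
\textbf{Proof proposal.}
The plan is to show that the singularity degree of a generic 2D framework whose underlying graph $G$ is a Laman graph plus one edge is \emph{exactly} $1$, by establishing the two inequalities separately: that it is at least $1$ and at most $1$.

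\emph{Lower bound (singularity degree $\geq 1$).} First I would argue that the framework is \emph{not} strictly feasible, so the singularity degree cannot be $0$. Write $G = H + e$ where $H$ is a Laman graph on the same $n$ vertices and $e$ is the extra edge. By \Cref{thm:PGlaman} (Pollaczek--Geiringer), $H$ is generically minimally rigid in $\R^2$, so its rigidity matrix $R(H,p)$ has full row rank $|E_H| = 2n-3$ and its rows span the $(2n-3)$-dimensional space orthogonal to the trivial infinitesimal motions. Adding the row for $e$ to obtain $R(G,p)$ therefore creates exactly one linear dependence among the rows; equivalently, the rigidity map $f_G$ has a nontrivial self-stress. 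This self-stress yields a nonzero equilibrium stress $\omega$, hence a nonzero PSD (after the centering/sign analysis, a genuinely semidefinite) stress matrix $\Omega$, which is precisely a solution of the auxiliary problem for \cref{prob:stress}. By \Cref{lem:edmcomple} the existence of a nonzero PSD stress matrix forces the singularity degree of the associated EDM completion problem to be at least $1$.

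\emph{Upper bound (singularity degree $\leq 1$).} Here the strategy mirrors \Cref{thm:lamansingle}: I would show that after one facial reduction step using the stress matrix $\Omega$ from above, the reduced problem is strictly feasible. The key is genericity. The Laman subgraph $H$ already pins down the generic image $\Pi_{E_H}(K(\Ss_+^n))$ to have full dimension $|E_H|$ by the algebraic-matroid argument of \Cref{thm:lamangeneric}; the single extra coordinate $d_e$ imposes exactly one algebraic relation, cutting the image of $G$ down to a hypersurface. The exposing vector $v$ (equivalently the stress $\Omega$) exposes the face $\face(d_E,\Pi_E(K(\Ss_+^n)))$, and after reducing to $\cM(\cK)\cap v^\perp$ I must verify that $d_E$ lands in the \emph{relative interior} of the reduced face, not on a further boundary stratum. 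This is where \Cref{lem:polyd} enters: if a second facial reduction step were possible, $d_E$ would lie on the boundary of the reduced semialgebraic image, hence in the zero set of some rational polynomial restricted to the face; by \Cref{lem:polyd} that polynomial would vanish on the whole affine span of the face, contradicting genericity of $d_E$ (which holds by \Cref{thm:lamangeneric}). Thus no second reduction step exists and the singularity degree is at most $1$.

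\emph{Main obstacle.} The delicate point is the upper bound, specifically controlling the geometry of the reduced cone $\Pi_E(K(\Ss_+^n))\cap v^\perp$ after one step. I must confirm that the face exposed by the single self-stress is the minimal face containing $b=d_E$, i.e., that exactly one stress direction is consumed and genericity guarantees relative-interiority thereafter. The adding of a single edge to a minimally rigid $H$ produces a one-dimensional space of self-stresses, so there is a unique (up to scaling) candidate stress matrix $\Omega$; I expect the hard work to lie in combining the rank relation \eqref{eqn:rankXO} with genericity to rule out any residual non-exposedness, invoking \Cref{cor:sing1} and \Cref{cor:sing2} to transfer the conclusion back to the level count of PSD stress matrices of $(G,p)$.
```
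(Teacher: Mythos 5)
Your two-sided strategy (show the degree is at least $1$ via a boundary/stress argument, then rule out a second facial reduction step via genericity) is the same overall shape as the paper's proof, but both halves as written contain genuine gaps.

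For the lower bound, you pass from ``the rigidity matrix of $G=H+e$ has a one-dimensional space of self-stresses'' to ``there is a nonzero \emph{PSD} stress matrix.'' A self-stress $\omega$ only yields a symmetric stress matrix $\Omega(\omega)$; no ``centering/sign analysis'' makes it positive semidefinite, and in general the (unique up to scale) stress matrix of a Laman-plus-one-edge framework can be indefinite. Moreover, the equivalence between ``singularity degree $\geq 1$'' and ``a nonzero PSD stress matrix exists'' is precisely the theorem of the alternative, so you cannot use the PSD stress as the \emph{means} of proving the lower bound without first establishing failure of strict feasibility. The paper instead argues boundary membership directly: by Theorem~\ref{thm:PGlaman} and the constant rank theorem, the local image $f_G(U)$ of the rigidity map is a $(2n-3)$-dimensional submanifold of $\R^{|E_1|}=\R^{2n-2}$, which forces $\bar d_{E_1}$ onto the boundary of $\Pi_{E_1}(K(\Ss_+^n))$.

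For the upper bound, you appeal to ``genericity of $d_E$ (which holds by Theorem~\ref{thm:lamangeneric}).'' That theorem requires the edge set to be Laman-sparse, which $E_1=E\cup\{e\}$ violates since $|E_1|=2n-2>2n-3$; indeed $d_{E_1}$ is \emph{not} generic --- it satisfies the circuit polynomial, which is exactly why the singularity degree is positive. The correct use of genericity (and the paper's) is of the configuration $\bar p$, not of the distance vector: any $f\in Q[X]$ vanishing at $\bar d_{E_1}$ pulls back through the rigidity map to a rational polynomial in the coordinates of $\bar p$, hence vanishes identically and so vanishes on all of the $(2n-3)$-dimensional set $f_G(U)$. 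Assuming a chain of two proper faces, the paper produces \emph{two} such polynomials $f_1,f_2$, using Lemma~\ref{lem:polyd} and a line through a relative-interior point to guarantee that $f_2$ cuts the zero set of $f_1$ properly, so that their common zero set has local dimension at most $2n-4$ at $\bar d_{E_1}$ --- contradicting that both contain $f_G(U)$. Your proposal needs this pullback-plus-dimension-count argument in place of the (false) genericity of $d_{E_1}$ for the upper bound to close.
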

\begin{proof}
Let $d_{E_1} = (d_1,\cdots, d_{|E_1|})$ be the point whose components are squared distances of a generic framework $(G, p)$ where the edge set of $G$ is $E_1 = E \cup e$. Assume $G$ is a Laman plus one graph and $E$ is the  set of the Laman graph while $e$ is the  extra one edge. 
We consider the rigidity map $f_G(\cdot)$ and Let $\bar{d}_{E_1} = f_G(\bar{p})$.

It is easy to see that the Jacobian matrix of the map $f_G$ is just the rigidity matrix in Definition~\ref{def:rigidmtx} multiplied by 2. 
Since $(G,\bar{p})$ is a generic framework, the Jacobian matrix of the map $f_G$ has maximum rank at $\bar{p}$ which is $2n -3$ by Theorem~\ref{thm:PGlaman}. By the constant rank theorem,  locally there is an open set $U$ in $\mathbb{R}^{2n}$ containing $\bar{p}$ such that $f_G(U)$ is a submanifold in $\mathbb{R}^{|E|}$ containing $\bar{d}_{E_1}$ of dimension $|E| = 2n - 3$. 
Hence $\bar{d}_{E_1}$ is on the boundary of  $\Pi_{E_1}(K(\Ss_+^n))$, so the singularity degree is at least 1. 

 By Tarski-Seidenberg theorem, $\Pi_{E_1}(K(\Ss_+^n))$ is a semialgebraic set generated by equalities and inequalities in $Q[X]$. Now suppose the minimal face containing $\bar{d}_{E_1}$ in $\Pi_{E_1}(K(\Ss_+^n))$ has singularity degree 2 or more.  Without loss assume $\bar{d}_{E_1} \in \mathcal{F}_2 \subsetneq \mathcal{F}_1 \subsetneq \Pi_{E_1}(K(\Ss_+^n))$ where $\mathcal{F}_1$ is a proper face of $\mathcal{F}_2$ and $\mathcal{F}_2$ is a proper face of $\Pi_{E_1}(K(\Ss_+^n))$. 
Then obviously $\mathcal{F}_1$ lies on the boundary of $\Pi_{E_1}(K(\Ss_+^n))$, so there exists some polynomial $f_1(X)$ in $Q[X]$ such that $f_1(\mathcal{F}_1) = 0$.  Pick a point $d^*$ in the relative interior of $\mathcal{F}_1$, then the line $L$ connecting $d^*$ and $\bar{d}_{E_1}$ is contained in the zero set of $f_1$ by Lemma~\ref{lem:polyd}. Now since $\mathcal{F}_2$ is on the boundary of $\mathcal{F}_1$,  $\bar{d}_{E_1}$ is also on the boundary of $\mathcal{F}_1$, we take all the polynomials defining the boundary of the semialgebraic set $\Pi_{E_1}(K(\Ss_+^n))$ that vanish on $\mathcal{F}_2$, among them there must exist another polynomial $f_2(X)$ in $Q[X]$ such that the zero set of $f_2$ intersects $L$ properly at $\bar{d}_{E_1}$ (otherwise the zero set of $f_2$ contains the entire line $L$, therefore $\bar{d}_{E_1}$ can not be on the boundary of $\mathcal{F}_1$). Since the zero set of $f_1$ contains the line $L$ while the zero set of $f_2$ intersects $L$ properly,  the local dimension of the intersection of the zero sets of $f_1$ and $f_2$ at $\bar{d}_{E_1}$ is at less one less than the local dimension of the zero sets of $f_1$ at $\bar{d}_{E_1}$. Hence the dimension of the intersection of the zero sets of $f_1$ and $f_2$ at $\bar{d}_{E_1}$ is at most $2n-4$ locally. 

Now since $f_1(\bar{d}_{E_1}) = 0$, we have $f_1(...,||\bar{p}_i - \bar{p}_j||^2,...) = g_1(\bar{p}_1,...,\bar{p}_n)= 0 $. 
Since $\bar{p}$ is algebraic independent over $Q$, $g_1$ must be a zero polynomial. Therefore 
  pick any $d^* \in f_G(U)$, we have $f_1(d^*) = f_1(...,||p^*_i - p^*_j||^2,...) = g_1(p^*_1,...,p^*_n) = 0$. Similarly $f_2(d^*) = f_2(...,||p^*_i - p^*_j||^2,...) = g_2(p^*_1,...,p^*_n) = 0$. However $f_G(U)$ has dimension $2n -3$, a contradiction to the previous conclusion that the local dimension of the intersection of $f_1$ and $f_2$ at $\bar{d}_{E_1}$ is at most $2n-4$ locally.   
\end{proof}
\begin{remark}
In~\cite{bernstein2021maximum}, it is shown that if $G$ is a Laman circuit, then there are generic 2-dimensional frameworks $(G,p)$ satisfying a non-zero PSD stress. So we have shown that in this case, there is exactly one level of non-zero PSD stress for a generic 2-dimensional framework $(G,p)$.
\end{remark}
\begin{proposition}\label{prop:lamand}
A generic 2D framework $(G,p)$ where the underlying graph $G$ is a Laman graph plus $d$ edges have singularity at most $d$.
\end{proposition}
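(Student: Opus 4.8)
The plan is to generalize the dimension-counting argument of \Cref{prop:laman1} from one extra edge to $d$ extra edges, with the case $d=1$ of that proposition serving as the model. Write the edge set as $E_1 = E \cup \{e_1,\dots,e_d\}$, where $E$ is the Laman subgraph, so that $|E_1| = 2n-3+d$, and let $\bar d_{E_1} = f_G(\bar p)$ be the squared-distance vector of the generic framework. Since the Laman subgraph is already generically minimally rigid, \Cref{thm:PGlaman} gives $\rank R(G,\bar p) = 2n-3$, and the $d$ extra edges cannot raise this rank beyond the two-dimensional maximum; hence by the constant rank theorem the local image $M := f_G(U)$ is a submanifold of $\R^{2n-3+d}$ of dimension $2n-3$, that is, of codimension exactly $d$, with $\bar d_{E_1}\in M$.

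The engine of the argument is a genericity principle: if $f \in Q[X]$ satisfies $f(\bar d_{E_1}) = 0$, then $f$ vanishes on all of $M$. Indeed, pulling back through the rigidity map yields $g(p_1,\dots,p_n) := f(\dots,\|p_i-p_j\|^2,\dots) \in Q[p_1,\dots,p_n]$ with $g(\bar p)=0$; since $\bar p$ is generic, $g\equiv 0$, so $f$ vanishes at the squared-distance vector of every nearby framework, i.e.\ on $M$. In particular, every $Q$-polynomial cutting out a boundary component of the semialgebraic set $\Pi_{E_1}(K(\Ss_+^n))$ through $\bar d_{E_1}$ automatically contains the entire $(2n-3)$-dimensional manifold $M$.

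Now suppose, for contradiction, that the singularity degree exceeds $d$. By \Cref{cor:sing1} the singularity degree of the completion problem equals that of $\face(\bar d_{E_1},\Pi_{E_1}(K(\Ss_+^n)))$, so there is a chain of proper exposed faces
\[
\bar d_{E_1} \in \F_{d+1} \subsetneq \F_d \subsetneq \cdots \subsetneq \F_1 \subsetneq \Pi_{E_1}(K(\Ss_+^n)),
\]
each lying in the relative boundary of its predecessor. By the Tarski--Seidenberg theorem $\Pi_{E_1}(K(\Ss_+^n))$ is semialgebraic over $Q$, so for each $i$ there is a boundary-defining polynomial $f_i \in Q[X]$ vanishing on $\F_i$. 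The goal is to choose the $f_i$ so that their common zero set $\bigcap_{i=1}^{d+1} Z(f_i)$ has local dimension at most $(2n-3+d)-(d+1)=2n-4$ at $\bar d_{E_1}$: starting from a point $d_i^\ast \in \ri \F_i$ and invoking \Cref{lem:polyd} to spread the vanishing of $f_i$ across the affine hull of $\F_i$, one finds at each level a fresh boundary polynomial whose zero set meets the current affine flag properly at $\bar d_{E_1}$, exactly as the polynomial $f_2$ was produced against the line $L$ in \Cref{prop:laman1}. Once this selection is achieved, the genericity principle forces $M \subseteq \bigcap_{i=1}^{d+1} Z(f_i)$, whence $2n-3 = \dim M \le 2n-4$, a contradiction; therefore the singularity degree is at most $d$.

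The main obstacle is precisely this selection step: guaranteeing that the $d+1$ nested proper faces yield $d+1$ polynomials whose zero sets intersect properly, so that the codimension drops by the full $d+1$ rather than stalling. In \Cref{prop:laman1} this reduced to the single observation that some boundary polynomial vanishing on $\F_2$ must cut $L$ transversally; here it must be iterated along the whole flag, and the bookkeeping — ensuring each new $f_i$ does not vanish identically on the component of $Z(f_1)\cap\cdots\cap Z(f_{i-1})$ through $\bar d_{E_1}$ — is where the real work lies. An alternative, possibly cleaner, route is induction on $d$ with \Cref{prop:laman1} as the base increment: delete one extra edge to pass from a Laman-plus-$d$ to a Laman-plus-$(d-1)$ graph and show that adjoining a single edge raises the singularity degree by at most one; there the difficulty is transferred to controlling how one facial reduction step interacts with the coordinate projection that forgets the deleted edge.
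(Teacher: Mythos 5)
Your proposal follows essentially the same route as the paper's proof: the paper likewise sets up the $(2n-3)$-dimensional submanifold $f_G(U)$, assumes a flag of $d+1$ proper faces, and runs exactly the induction you sketch — at each level picking $d^*\in\ri\F_i$, forming the line $L$ to $\bar d_{E_1}$, and selecting a boundary polynomial $f_{i+1}$ vanishing on $\F_{i+1}$ that meets $L$ properly, so the local dimension of $\bigcap_i Z(f_i)$ drops to $2n-4$ and contradicts the genericity principle. The ``selection step'' you flag as the main obstacle is handled in the paper by precisely the iterated transversality argument you describe, so no new idea is needed beyond what you have written.
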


\begin{proof}
The proof is similar to the proof of the previous theorem. 
Let $\bar{d}_{E_1} = (d_1,\cdots, d_{|E_1|})$ be the point whose components are squared distances of a generic framework $(G, \bar{p})$ where the edge set of $G$ is $E_1 = E \cup E_d$. Assume $G$ is a Laman plus $d$ graph and $E$ is the set set of the Laman graph while $E_d$ is the  extra $d$ edges. 
Again let $f_G: p \rightarrow d_{E_1}$ to be the rigidity map from frameworks to squared distances and let $\bar{d}_{E_1} = f_G(\bar{p})$.

          Now suppose the minimal face containing $\bar{d}_{E_1}$ in $\Pi_{E_1}(K(\Ss_+^n))$ has singularity degree $d+1$ or more.  Without loss assume $\bar{d}_{E_1} \in \mathcal{F}_{d+1} \subseteq \cdots \subsetneq \mathcal{F}_1 \subsetneq \Pi_{E_1}(K(\Ss_+^n))$. By a similar argument, we can find polynomials $f_1,...,f_{d+1}$ in $Q[X]$ and we claim the intersection of $f_1,...,f_{d+1}$ at $\bar{d}_{E_1}$ has dimension at most $2n -4$. We prove the claim by induction. We assume $\mathcal{F}_i$ is contained in the zero set of $f_1,..,f_i$ and the intersection of $f_1,..,f_i$ at $\bar{d}_{E_1}$ has dimension at most $2n-3 + d - i$ locally. Now $\mathcal{F}_{i+1}$ is on the boundary of $\mathcal{F}_i$. Pick a point $d^*$ in the relative interior of $\mathcal{F}_i$ and let $L$ be the line connecting $d^*$ and $\bar{d}_{E_1}$. Then there exists a polynomial $f_{i+1}$ which vanishes on $\mathcal{F}_{i+1}$ such that $f_{i+1}$ intersects $L$ properly. Hence the dimension of the intersection of $f_1,...,f_i$ with $f_{i+1}$ is at least one less than the intersection of $f_1,..,f_i$ at $\bar{d}_{E_1}$. Therefore the intersection of $f_1,...,f_i, f_{i+1}$ at $\bar{d}_{E_1}$ has dimension at most $2n-3+ d - i- 1$. The induction hypothesis holds. Hence the intersection of $f_1,..., f_{d+1}$ at $\bar{d}_{E_1}$ has dimension at most $2n-3+ d - d-1 = 2n-4$. 
          
         Again  since $(G,\bar{p})$ is a generic framework, the Jacobian matrix of the map $f_G$ has maximum rank at $\bar{p}$ which is $2n -3$.  So locally there is an open set $U$ containing $\bar{p}$ such that $f_G(U)$ is a submanifold containing $\bar{d}_{E_1}$ of dimension $|E| = 2n - 3$. Then $f_1(\bar{d}_{E_1})  = f_2(\bar{d}_{E_1}) = ... = f_{d+1}(\bar{d}_{E_1}) = 0$ and due to algebraic independence of $\bar{p}$ we have $f_1(d^*)  = f_2(d^*) = ... = f_{d+1}(d^*) = 0$ for any $\bar{d}^* \in f_G(U)$. A contradiction to the previous conclusion that the intersection of $f_1,..., f_{d+1}$ at $\bar{d}_{E_1}$ has dimension at most $2n-3+ d - d-1 = 2n-4$ locally.
\end{proof}

\emph{Example 1. } Consider the EDM completion problem where $G$ is $K_4$
\[
\begin{array}{rcl}
     \mbox{find \;} X & \succeq & 0 \\
    \mbox{s.t. } \Pi_E(K(X)) &=& d_E \\
\end{array}
\]
where $E = \{12,13,14,23,24,34\}$, $X \in \Ss_+^4$ and $d_E$ is the set of distances from a generic configuration $P = [p_1,p_2,p_3,p_4]^T$ as shown in Figure~\ref{fig:pic2}. 
\begin{figure}[h]
    \centering
    \includegraphics[width=0.5\textwidth,height=0.3\textheight]{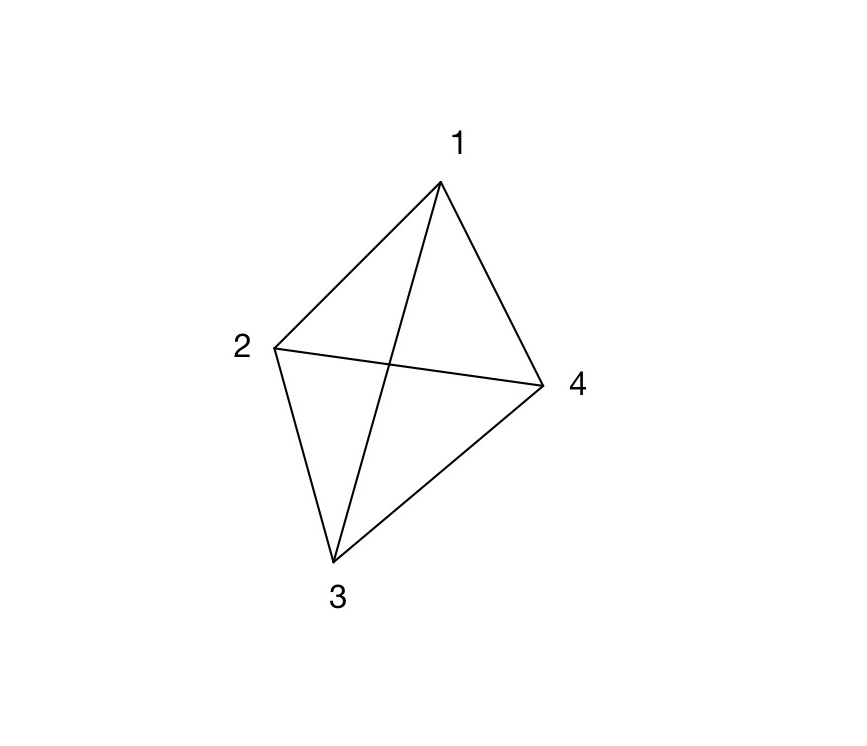}
    \caption{framework underlying $K_4$}
    \label{fig:pic2}
\end{figure}
The stress matrix has the following form
\[
\cF = \left( 
\begin{array}{cccc}
    w_{12} + w_{13} + w_{14} & -w_{12} & -w_{13} & -w_{14} \\
    -w_{12} & w_{12} + w_{23} + w_{24} & -w_{23} & -w_{24} \\ 
    -w_{13} & -w_{23} & w_{13} + w_{23}+w_{34} & -w_{34} \\
    -w_{14} & -w_{24} & -w_{34} & w_{14}+w_{24}+w_{34}
\end{array}
\right)
\]
where $w_{12},w_{23},w_{14},w_{34}$ are outward stresses (positive signs) and $w_{24},w_{13}$ are negative stresses (negative signs).

It can be seen there must exist at least one nonzero PSD stress matrix as the framework with distance $d_E$ can not be lifted to higher dimension. If vertex 1 is rotated around the edge $24$ to lift the framework to dimension 3, the distance between vertex 1 and vertex 3 would shrink. Also since $\rank(PP^T) = 2$ and the rank of a nonzero PSD stress matrix is at least 1, we have $\rank (PP^T) + \rank(\cF) = 3 = n -1$ where $n = 4$ is the number of vertices. By \eqref{eqn:rankXO},  $\rank (PP^T) + \rank(\cF)$ is already maximal, hence there is only 1 level of PSD stress matrix.

\subsection{Singularity of frameworks in $\R^3$}
The techniques we have developed so far to analyze generic frameworks in $\R^2$ can be easily adapted to deal with generic frameworks in $\R^3$.   Recall the following theorem characterizing generically minimally rigid graphs in $\R^3$.
\begin{theorem}[Gluck~\cite{gluck1975almost}]
Every maximal planar graph is generically minimally rigid in $\R^3$.
\end{theorem}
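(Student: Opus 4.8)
The plan is to reduce generic minimal rigidity to generic infinitesimal rigidity and then to establish the latter by exhibiting a single infinitesimally rigid realization. First I would count edges: a maximal planar graph on $n$ vertices is a triangulation of the sphere, so with $f$ triangular faces and every edge lying on exactly two faces we have $3f = 2|E|$, and Euler's formula $n - |E| + f = 2$ gives $|E| = 3n-6$. In $\R^3$ the target rank for infinitesimal rigidity is $dn - \binom{d+1}{2} = 3n-6$, so the edge count matches the rank \emph{exactly}. Consequently, if the generic rank of the rigidity matrix $R(G,p)$ equals $3n-6$, then the $|E| = 3n-6$ rows are automatically linearly independent, so $G$ is generically minimally infinitesimally rigid; by the standard equivalence of generic infinitesimal rigidity with generic continuous rigidity (Asimow--Roth), this yields the theorem. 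This is the exact $\R^3$ analogue of the $\R^2$ characterization in \Cref{thm:PGlaman}.

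The core step is to show the generic rank of $R(G,p)$ attains the maximal possible value $3n-6$. Since the entries of $R(G,p)$ are polynomials in the coordinates of $p$, the function $\rank R(G,p)$ is lower semicontinuous and attains its maximum on a Zariski-open, hence dense, set of configurations; that maximum is the generic rank. It therefore suffices to produce one configuration $p^*$ with $\rank R(G,p^*) = 3n-6$. Here I would invoke Steinitz's theorem: every maximal planar graph (equivalently, every $3$-connected planar simplicial graph) is the edge graph of a convex simplicial $3$-polytope $P$. Taking $p^*$ to be the vertex positions of $P$, classical convex-polytope rigidity—the Cauchy--Dehn theorem on infinitesimal rigidity of convex polytopes—gives that $(G,p^*)$ is infinitesimally rigid, i.e. $\rank R(G,p^*) = 3n-6$. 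Combined with the upper bound $\rank R(G,p) \le 3n-6$, which comes from the six-dimensional space of trivial infinitesimal motions (three translations and three rotations) in $\R^3$, this pins the generic rank at exactly $3n-6$.

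The main obstacle is the infinitesimal rigidity of the convex polytope, i.e. the Cauchy--Dehn input; this is the genuinely nontrivial classical ingredient, and it is precisely what makes the $\R^3$ case qualitatively harder than the purely combinatorial Laman condition available in $\R^2$. In the present setting I would simply cite Dehn's theorem, with Steinitz's theorem supplying the convex realization, rather than reprove either. The remaining pieces—the edge count, the semicontinuity of matrix rank, and the passage from ``maximal rank at one point'' to ``generic minimal rigidity''—are routine once a single infinitesimally rigid realization is in hand.
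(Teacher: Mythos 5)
The paper states this result purely as a citation to Gluck and gives no proof of its own, so there is nothing internal to compare against; your outline is the standard (indeed essentially Gluck's original) argument: Euler's formula gives $|E|=3n-6$, Steinitz's theorem supplies a convex simplicial realization, Dehn's theorem gives infinitesimal rigidity there, and lower semicontinuity of rank plus the exact match between $|E|$ and $3n-6$ upgrades this to generic \emph{minimal} infinitesimal rigidity, with Asimow--Roth closing the loop. The only point worth flagging is the implicit use of the fact that a maximal planar graph on $n\ge 4$ vertices is $3$-connected (so that Steinitz applies); with that noted, the proof is correct.
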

Let $(G,p)$ be a generical framework in $\R^3$. Then $|E| = 3|V| - 6$ and the rigidity matrix $R(G,p)$ has full rank with $3|V| - 6$ linearly independent rows. Similarly, one can prove the following theorem regarding singularity degree of generic frameworks in $\R^3$. 
\begin{proposition}\label{prop:laman3d}
A generic 3D framework $(G,p)$ where the underlying graph $G$ is a maximal planar graph  plus $d$ edges have singularity at most $d$.
\end{proposition}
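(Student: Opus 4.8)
The plan is to mirror the proof of \Cref{prop:lamand} almost verbatim, since the only ingredients that change when moving from $\R^2$ to $\R^3$ are the dimension count for the rigidity map and the base case of the induction. The key structural fact we need is that for a generic framework whose underlying graph is a maximal planar graph in $\R^3$, the rigidity matrix $R(G,p)$ has full rank $3n-6$ by Gluck's theorem, so the rigidity map $f_G:\R^{3n}\to\R^{|E|}$ has maximal rank $3n-6$ at the generic point $\bar p$. By the constant rank theorem, there is an open set $U\ni\bar p$ on which $f_G(U)$ is a submanifold of dimension $3n-6$ through $\bar d_{E_1}$.

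First I would fix the setup exactly as in \Cref{prop:lamand}: write $E_1=E\cup E_d$ where $E$ is the edge set of the maximal planar graph and $E_d$ are the $d$ extra edges, set $\bar d_{E_1}=f_G(\bar p)$, and invoke the Tarski--Seidenberg theorem so that $\Pi_{E_1}(K(\Ss_+^n))$ is a semialgebraic set cut out by polynomials in $Q[X]$. Then I would argue by contradiction: suppose the minimal face containing $\bar d_{E_1}$ has singularity degree $d+1$ or more, giving a strict chain $\bar d_{E_1}\in\mathcal F_{d+1}\subsetneq\cdots\subsetneq\mathcal F_1\subsetneq\Pi_{E_1}(K(\Ss_+^n))$.

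The heart of the argument is the same induction as before. At each step $\mathcal F_{i+1}$ lies on the relative boundary of $\mathcal F_i$; picking $d^*$ in the relative interior of $\mathcal F_i$ and the line $L$ joining $d^*$ to $\bar d_{E_1}$, \Cref{lem:polyd} forces any boundary-defining polynomial $f_{1},\dots,f_i$ vanishing on $\mathcal F_i$ to vanish on all of $L$, while one can select a further polynomial $f_{i+1}$ vanishing on $\mathcal F_{i+1}$ whose zero set meets $L$ properly at $\bar d_{E_1}$. This drops the local dimension of the common zero set by at least one at each stage, so after $d+1$ steps the intersection of the zero sets of $f_1,\dots,f_{d+1}$ has local dimension at most $(3n-6)-1=3n-7$ at $\bar d_{E_1}$. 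On the other hand, algebraic independence of $\bar p$ shows each $f_j(\bar d_{E_1})=0$ pulls back to a polynomial in $p$ that vanishes identically, hence $f_1=\cdots=f_{d+1}=0$ on the whole $(3n-6)$-dimensional manifold $f_G(U)$, contradicting the dimension bound $3n-7$.

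The only genuine change from the $\R^2$ proof is bookkeeping: the generic maximal rank is $3n-6$ rather than $2n-3$, so the induction hypothesis should read that the intersection of $f_1,\dots,f_i$ at $\bar d_{E_1}$ has local dimension at most $(3n-6)+d-i$, terminating at $3n-7$ after the $(d+1)$-st step. The main obstacle I anticipate is not the dimension arithmetic but the proper-intersection step: one must justify that among the boundary polynomials vanishing on $\mathcal F_{i+1}$ there is always at least one whose zero set does not contain $L$, and that intersecting with it genuinely cuts the local dimension. As in \Cref{prop:lamand}, this rests on the fact that $\bar d_{E_1}$ lies on the boundary of $\mathcal F_i$ (so some defining polynomial cannot contain the interior-to-boundary line $L$), together with a local-dimension-of-intersection fact for real (or complexified) semialgebraic sets; I would lean on the same semialgebraic machinery and \Cref{lem:polyd} invoked in the two-dimensional case rather than reproving it, simply remarking that the argument is identical once the dimension count is adjusted.
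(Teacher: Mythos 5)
Your proposal is correct and follows exactly the route the paper takes: the paper's own proof is a one-line deferral to \Cref{prop:lamand}, and your write-up simply makes explicit the only substantive changes (Gluck's theorem in place of Pollaczek--Geiringer, and the dimension count $3n-6$ replacing $2n-3$, so the induction terminates at $3n-7$). Your flagged concern about the proper-intersection step is a reasonable caveat, but it is inherited verbatim from the two-dimensional argument rather than being a new issue in three dimensions.
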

\begin{proof}
The proof follows similarly from the proof of Proposition~\ref{prop:lamand}.
\end{proof}
\subsection{Connection to dimensional and universally rigidity}

We recall the following theorem characterizing generic universally rigid framework
\begin{theorem}~\cite{gortler2014generic}\label{thm:univrigid}
Suppose $G$ is a graph with $v$ vertices or more vertices and $p$ is a generic configuration in $\R^d$, suppose $v \geq d+2$. Suppose there is a PSD equilibrium stress matrix $\Omega$ such that $\rank(\Omega)= v - d -1$. Then $(G,p)$ is universally rigid.
\end{theorem}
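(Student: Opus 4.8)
The plan is to reconstruct the argument of~\cite{gortler2014generic} in two stages: a purely algebraic stress-energy certificate (due to Connelly) that confines every competing framework to be an affine image of $p$, followed by a genericity argument that collapses those affine images to congruences. Throughout write the configuration as $P\in\mathbb{R}^{n\times d}$ with $n:=v$ and $i$-th row $p_i$, and associate to the equilibrium stress $\omega$ the matrix $\Omega\in\Ss^n$ of the statement. The starting observation is that the \emph{stress energy}
\[
E_\Omega(Q):=\sum_{ij\in E}\omega_{ij}\|Q_i-Q_j\|^2=\trace(Q^\top\Omega Q)
\]
of any configuration $Q$ depends only on the edge lengths $\|Q_i-Q_j\|$ for $ij\in E$, and that $\Omega\succeq0$ makes $E_\Omega$ a nonnegative quadratic form vanishing exactly when every column of $Q$ lies in $\ker\Omega$.

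First I would show that any competitor is an affine image of $p$. Let $Q$ be a configuration in \emph{any} $\mathbb{R}^D$ with $\|Q_i-Q_j\|^2=\|p_i-p_j\|^2$ for every $ij\in E$; this is exactly a competing solution of the EDM completion problem, so ruling all such $Q$ out (up to congruence) is what universal rigidity asks. Since $\Omega$ is an equilibrium stress matrix, $\Omega\mathbf{1}=0$ and $\Omega P=0$, whence $E_\Omega(p)=0$; because the energy only sees the edge lengths, $E_\Omega(Q)=E_\Omega(p)=0$ as well. Nonnegativity of $E_\Omega$ then forces each of the $D$ columns of $Q$ (vectors in $\mathbb{R}^n$) into $\ker\Omega$. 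Here the hypotheses enter: a generic $p$ with $n\ge d+2$ affinely spans $\mathbb{R}^d$, so $\spanl\{\mathbf{1}\}$ together with the columns of $P$ is $(d+1)$-dimensional, and $\rank\Omega=n-d-1$ gives $\dim\ker\Omega=d+1$; hence $\ker\Omega$ \emph{equals} this affine-span space. Consequently every column of $Q$ is an affine combination of the columns of $P$ and $\mathbf{1}$, i.e. $Q_i=Ap_i+b$ for some linear $A\colon\mathbb{R}^d\to\mathbb{R}^D$ and translation $b$.

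It then remains to rule out non-congruent affine images. Length preservation under $Q_i=Ap_i+b$ is the condition $(p_i-p_j)^\top(A^\top A-I)(p_i-p_j)=0$ for all $ij\in E$; writing $S:=A^\top A-I\in\Ss^d$, this says the edge directions $\{p_i-p_j\}$ all lie on the quadric $\{x:x^\top S x=0\}$. If I can force $S=0$, then $A^\top A=I$, so $A$ is an isometry onto its image and $Q$ is congruent to $p$; as $D$ was arbitrary this is precisely universal rigidity. Now the map $S\mapsto\big((p_i-p_j)^\top S(p_i-p_j)\big)_{ij\in E}$ is linear on the $\binom{d+1}{2}$-dimensional space $\Ss^d$, so $S=0$ is forced exactly when the squared edge directions span $\Ss^d$.

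The hard part will be this last genericity step, not the energy computation. One must argue that a generic configuration carrying a maximal-rank ($n-d-1$) stress matrix has enough edges, in sufficiently general position (at least $\binom{d+1}{2}$ independent squared directions), that the displayed linear system admits only $S=0$. The clean route is a dimension count in configuration space: for each fixed $S\neq0$ the locus of $p$ satisfying all the equations $(p_i-p_j)^\top S(p_i-p_j)=0$ is a proper subvariety, and the union over all nonzero $S$ of these loci is again a proper subvariety once the stress-rank hypothesis guarantees the requisite number of edges, so a generic $p$ avoids it. Establishing that the maximal-rank stress does guarantee those edges, and packaging the union-over-$S$ into a single genericity statement, is the genuine technical content, and it is exactly the portion carried out in~\cite{gortler2014generic}.
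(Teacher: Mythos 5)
First, note that the paper does not prove this statement at all: it is imported verbatim from \cite{gortler2014generic} and used as a black box in the next theorem, so there is no internal proof to compare against. Your two-stage skeleton is the standard (and correct) one: the stress-energy identity $\sum_{ij\in E}\omega_{ij}\|Q_i-Q_j\|^2=\trace(Q^\top\Omega Q)$, positive semidefiniteness, and the rank count $\dim\ker\Omega=d+1=\dim\spanl\{\mathbf{1},\text{cols of }P\}$ correctly force every competitor to be an affine image $Q_i=Ap_i+b$, and you correctly reduce universal rigidity to showing that the only $S\in\mathcal{S}^d$ with $(p_i-p_j)^\top S(p_i-p_j)=0$ for all $ij\in E$ is $S=0$, i.e.\ that the member directions do not lie on a conic at infinity. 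This is exactly the ``no non-trivial affine flex'' hypothesis that appears explicitly in Connelly's super-stability theorem as quoted elsewhere in this paper; the entire content of the cited result is that genericity lets one discharge it.

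That last step is where your argument has a genuine gap, and the sketch you offer would not close it. First, the union over all nonzero $S$ of the proper subvarieties $\{p:(p_i-p_j)^\top S(p_i-p_j)=0\ \forall ij\in E\}$ is an uncountable union, which need not be a proper subvariety; the correct packaging is an incidence variety in $(p,[S])$-space projected to $p$, with a dimension bound on the fibers. Second, and more substantively, the mechanism is not ``the stress-rank hypothesis guarantees enough edges in general position'': no edge count alone yields the conclusion, because the quadrics $(p_i-p_j)(p_i-p_j)^\top$ are built from only $n$ points and their span in $\mathcal{S}^d$ must be analyzed using the \emph{existence of the nonzero equilibrium stress itself}. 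The lemma actually needed (due to Connelly, and reused in \cite{gortler2014generic}) is: if $p$ is generic and $(G,p)$ admits a nonzero equilibrium stress, then the edge directions do not lie on a conic at infinity; its proof couples the stress to the putative conic to produce a polynomial identity in the coordinates of $p$, contradicting genericity. Without that lemma (or an equivalent), your proof establishes only that every competitor is an affine image of $p$ --- i.e.\ dimensional rigidity plus affine flexes remain possible --- not universal rigidity.
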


We are now ready to prove that for generic frameworks under certain graphical structure, dimensional rigidity and universal rigidity are equivalent.
\begin{theorem}
Given a generic framework $(G,p)$ in $\R^2$ where $G$ is a Laman graph plus one edge, or a generic framework in $\R^3$ where $G$ is a maximal planar graph plus one edge,  if $(G,p)$ is dimensionally rigid, then $(G,p)$ must be universally rigid.
\end{theorem}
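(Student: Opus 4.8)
The plan is to reduce the statement to the generic universal rigidity criterion of Theorem~\ref{thm:univrigid} by supplying it with the single PSD stress matrix whose existence follows once we combine the singularity-degree bounds with dimensional rigidity. Throughout write $n=|V|$ and let $d_0$ denote the dimension of the affine span of $p$. Since $p$ is generic and a Laman-plus-one graph forces $n\geq 4$ while a maximal-planar-plus-one graph forces $n\geq 5$, in both cases the points affinely span the ambient space, so $d_0=2$ in $\R^2$ and $d_0=3$ in $\R^3$; in particular $n\geq d_0+2$.

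First I would invoke the singularity-degree bounds already proved: Proposition~\ref{prop:laman1} gives singularity degree exactly $1$ in the planar case, and Proposition~\ref{prop:laman3d} (specialized to one extra edge) gives singularity degree at most $1$ in the spatial case. By Lemma~\ref{lem:edmcomple} this singularity degree equals the minimal number of levels of nonzero PSD stress matrices arising in a facial reduction of the stress feasibility problem \eqref{prob:stress}, and by Proposition~\ref{prop:min} the minimum is realized by the greedy facial reduction; hence the greedy reduction reaches the minimal face in at most one step.

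Next I would extract the stress matrix needed for Theorem~\ref{thm:univrigid}. Let $d^{*}$ be the rank of a maximal-rank solution of the minimal face, i.e.\ the largest dimension in which the given edge lengths can be realized. A complete greedy facial reduction satisfies the rank relation $d^{*}+\sum_i\rank(\Omega_i)=n-1$ (compare \eqref{eqn:rankXO}), and, by definition, $(G,p)$ is dimensionally rigid exactly when $d^{*}=d_0$. The greedy sequence cannot have length $0$: that would force $\sum_i\rank(\Omega_i)=0$ and hence $d_0=d^{*}=n-1$, contradicting $d_0\in\{2,3\}$ together with $n\geq d_0+2$. Since its length is also at most $1$, the greedy reduction produces a single PSD equilibrium stress matrix $\Omega$, and combining the rank relation with $d^{*}=d_0$ yields $\rank(\Omega)=n-1-d_0=n-d_0-1$.

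Finally I would apply Theorem~\ref{thm:univrigid} with $v=n$ and $d=d_0$: the configuration is generic, $n\geq d_0+2$, and $\Omega$ is a PSD equilibrium stress matrix of rank $v-d-1$, so $(G,p)$ is universally rigid. The main obstacle is the bookkeeping of the previous paragraph, namely checking that under dimensional rigidity the greedy facial reduction has length exactly $1$ and that its unique stress matrix attains the full rank $n-d_0-1$ required by the criterion; this rests on identifying the maximal realization dimension $d^{*}$ with the affine span dimension $d_0$ and on the rank relation for centered solutions and stresses. A subsidiary check, already dispatched above, is that generic configurations affinely span the ambient space and satisfy $n\geq d_0+2$, both guaranteed by the edge-count constraints of the two graph families.
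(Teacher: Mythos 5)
Your proposal is correct and follows essentially the same route as the paper: combine the singularity-degree-at-most-one bounds with dimensional rigidity to produce a single PSD equilibrium stress matrix of maximal rank $n-d_0-1$, then invoke Theorem~\ref{thm:univrigid}. The paper's own proof is a two-line version of exactly this argument; your additional bookkeeping (ruling out length~$0$, verifying $n\geq d_0+2$ and the rank identity) only makes explicit what the paper leaves implicit.
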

\begin{proof}
    Since $p$ is dimensional rigid, and $(G,p)$ has singularity degree at most one by Proposition~\ref{prop:lamand} and Proposition~\ref{prop:laman3d}, we know that the PSD equilibrium stress matrix has maximal rank $v-d-1$. Therefore $(G,p)$ is universally rigid by Theorem~\ref{thm:univrigid}.
\end{proof}
\section{Bounding singularity degree for tensegrities}
The above results can be easily extended to linear conic optimization problems where inequalities are included by adding auxiliary variables.  
\subsection{Tensegrity}
A tensegrity $(G,p)$ is a framework where some edges (bars) are replaced by cables, which are constrained to not get longer in length, and some 
bars are replaced by struts, which are constrained to not get shorter in length. Given a graph $G$ with edge sets for bars, cables and struts, a tensegrity underlying $G$ is a feasible solution of the following linear SDP feasibility problem.
\begin{equation}\label{prob:edmtensegrity}
\begin{array}{cc}
   	\mbox{find\; } \hat X \in \Ss_{+}^n  \\
 \mbox{\;such that }
	\Pi_{E_0}(K(\hat X)) = d_{E_0},\, \\
	\Pi_{E_1}(K(\hat X)) \leq d_{E_1},\,\\
	\Pi_{E_2}(K(\hat X)) \geq d_{E_2},\, \hat Xe = 0.
\end{array}
\end{equation}
where $E = E_0 \cup E_1 \cup E_2$,  $E_0$ is the edge set for bars, $E_1$ is the edge sets for cable , and $E_2$ is the edge set for struts.

The auxiliary problem for $\eqref{prob:edmtensegrity}$ is then 
\[
\begin{array}{rcll}
\langle \bar{\Omega},J\hat{X}J\rangle &=& 0 \\
\langle J E_{ij} J, \bar{\Omega} \rangle &=& 0, \forall ij \in (E_0 \cup E_1 \cup E_2)^c\\
\langle J E_{ij} J, \bar{\Omega} \rangle & \geq & 0, \forall ij \in E_1\\
\langle J E_{ij} J, \bar{\Omega} \rangle & \leq & 0, \forall ij \in E_2\\
\bar{\Omega} &\in&  \Ss_+^n  & \qquad \qquad 
\end{array}
\]
A PSD stress matrix $\cF = J \bar{\cF} J$ is called \emph{proper} if it satisfies the above auxiliary problem or equivalently $\cF$ is a PSD stress matrix for the framework and $\cF_{ij} \geq 0$, when $\{ i,j\}$ is a cable, and  $\cF_{ij} \leq 0$, when $\{ i,j\}$ is a strut.  

We add auxiliary variables to problem \eqref{prob:edmtensegrity} 
\begin{equation}\label{prob:edmtensegrityauxi}
\begin{array}{cc}
   	\mbox{find\; } \hat X \in \Ss_{+}^n  \\
 \mbox{\;such that }
	\Pi_{E_0}(K(\hat X)) = d_{E_0},\, \\
	\Pi_{E_1}(K(\hat X)) + z_{E_1} =d_{E_1},\,\\
	\Pi_{E_2}(K(\hat X)) - z_{E_2} =  d_{E_2},\, \hat Xe = 0. \\
	z_{E_1} \geq 0, z_{E_2} \geq 0
\end{array}
\end{equation}
Let $\cK_2 = (0^{|E_0|},\R^{|E_1|}_{\geq 0}, \R^{|E_2|}_{\leq 0})$ and $\cK_1 =  \Pi_{E}(K(\Ss_+^n))$.
The singularity degree of problem ~\eqref{prob:edmtensegrity} is then the minimal face containing $(d_{E_0}, d_{E_1}, d_{E_2})^T$ over the cone $\cK_1 + \cK_2$ where the $+$ sign is the Minkowski sum. It is easy to see $\cK_1 + \cK_2$ is a semialgebraic set since $\cK_1$ is semialgebraic. The boundary of  $\cK_1 + \cK_2$ is generated by polynomials in $Q[X]$. 

\begin{proposition}\label{thm:lamandtensegrity}
A generic tensegrity framework $(G,p)$ in $\R^2$ where the underlying graph $G$ is a Laman graph plus $d$ edges, or a generic tensegrity framework $(G,p)$ in $\R^3$ where the underlying graph $G$ is a maximal planar graph plus $d$ edges have singularity at most $d$.
\end{proposition}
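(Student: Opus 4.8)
The plan is to run the argument of \textbf{Proposition~\ref{prop:lamand}} (and, in the three-dimensional case, \textbf{Proposition~\ref{prop:laman3d}}) essentially verbatim, with the single semialgebraic cone $\Pi_E(K(\Ss_+^n))$ replaced by the Minkowski sum $\cK_1+\cK_2$. First I would record the reduction already set up before the statement: writing $\bar p$ for the given generic configuration and $\bar d := (d_{E_0},d_{E_1},d_{E_2})^T = f_G(\bar p)$ for the vector of \emph{all} squared edge lengths (bars, cables, and struts together, so that $\bar d\in\cK_1\subseteq\cK_1+\cK_2$), the singularity degree of \eqref{prob:edmtensegrity} equals the singularity degree of $\face(\bar d,\cK_1+\cK_2)$. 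The ambient space is $\R^{|E|}$ with $|E|=(2n-3)+d$ in the Laman-plus-$d$ case and $|E|=(3n-6)+d$ in the maximal-planar-plus-$d$ case, and $\cK_1+\cK_2$ is semialgebraic with boundary cut out by polynomials in $Q[X]$.

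Next I would argue by contradiction exactly as in \textbf{Proposition~\ref{prop:lamand}}. Suppose the singularity degree were $d+1$ or more, producing a strict chain $\bar d\in\mathcal{F}_{d+1}\subseteq\cdots\subsetneq\mathcal{F}_1\subsetneq\cK_1+\cK_2$. Each proper face $\mathcal{F}_i$ lies on the boundary of $\cK_1+\cK_2$, hence in the zero set of some boundary polynomial $f_i\in Q[X]$. At each step I would pick a point $d^\ast$ in the relative interior of $\mathcal{F}_i$ and let $L$ be the line through $d^\ast$ and $\bar d$; by \textbf{Lemma~\ref{lem:polyd}} the zero set of the polynomial already chosen contains all of $L$, while the next polynomial $f_{i+1}$ (vanishing on $\mathcal{F}_{i+1}$ but not on $L$) meets $L$ properly. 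The same induction as in \textbf{Proposition~\ref{prop:lamand}} then bounds the local dimension of $\{f_1=\cdots=f_{d+1}=0\}$ at $\bar d$ by $|E|-(d+1)=2n-4$ (respectively $3n-7$).

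The contradiction comes from the rigidity map. Since the underlying graph contains a spanning Laman subgraph (respectively a spanning maximal planar subgraph), at the generic $\bar p$ the Jacobian of $f_G$, namely twice the rigidity matrix $R(G,\bar p)$, has maximal rank $2n-3$ (respectively $3n-6$) by \textbf{Theorem~\ref{thm:PGlaman}} (respectively Gluck's theorem~\cite{gluck1975almost}); so on a small open set $U\ni\bar p$ the image $f_G(U)$ is a submanifold through $\bar d$ of dimension $2n-3$ (respectively $3n-6$). For each $i$, $f_i(\bar d)=0$ reads $f_i(\ldots,\|\bar p_k-\bar p_l\|^2,\ldots)=g_i(\bar p)=0$, and since $\bar p$ is algebraically independent over $Q$ we get $g_i\equiv 0$, so $f_i$ vanishes on all of $f_G(U)$. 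Hence $f_G(U)\subseteq\{f_1=\cdots=f_{d+1}=0\}$, forcing $2n-3\le 2n-4$ (respectively $3n-6\le 3n-7$), a contradiction; thus the singularity degree is at most $d$.

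The step I expect to be the real obstacle is the semialgebraic and rationality bookkeeping for the Minkowski sum. One must justify that $\cK_1+\cK_2$, the sum of the distance cone with the polyhedral cone $\cK_2=(0^{|E_0|},\R^{|E_1|}_{\ge 0},\R^{|E_2|}_{\le 0})$, still has its boundary described by finitely many polynomials with rational coefficients; this is where Tarski--Seidenberg enters, together with the observation that adjoining the slack directions for the cables ($E_1$) and struts ($E_2$) introduces no irrational data. Granting this, the pullback-and-genericity step applies uniformly to faces whose exposing involves either the $\cK_1$ part or the cable/strut directions, and the proper-intersection claim at $\bar d$ is checked exactly as in \textbf{Proposition~\ref{prop:lamand}}.
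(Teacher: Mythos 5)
Your proposal is correct and follows essentially the same route as the paper, whose own proof consists of the single sentence that one replaces $\Pi_{E_1}(K(\Ss_+^n))$ by $\cK_1+\cK_2$ in the proof of Proposition~\ref{prop:lamand}. The only substantive point you add --- that the Minkowski sum $\cK_1+\cK_2$ remains semialgebraic with boundary cut out by polynomials in $Q[X]$ --- is exactly the fact the paper asserts in the paragraph preceding the proposition, so your expansion is a faithful (and more detailed) rendering of the intended argument.
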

\begin{proof}
The proof follows by replacing $\Pi_{E_1}(K(\Ss_+^n))$ with  $\cK_1 + \cK_2$ in the proof of Proposition~\ref{prop:lamand}.
\end{proof}

\section{Conclusion}

We define the singularity degree of any face of a convex cone as the minimum number of steps to expose it by using exposing vectors in a sequence of growing dual cones.  Given a convex cone, the linear image of it is also a convex cone.  However, the linear image of a facially exposed convex cone is not necessarily facially exposed.  We show that the singularity degree of the minimum face containing vector $b$ is exactly the singularity degree of the linear conic optimization problem $\{ \min c^T x \;|\; \cM(x) = b, x \in \cK\}$. We then utilize tools from algebraic geometry and rigidity theory to give an upper bound of the singularity degree of the EDM completion problem related to  genericframeworks or tensigrities in the Euclidean space $\R^d$ where $d = 2$ or $3$. The most interesting case seems to be the case where $G$ is a Laman graph plus 2 edge in $\R^2$, as this is the first case where strict complementarity could possibly fail for a generic framework in $\R^2$. So far, all the frameworks where strict complementarity fails in the literature are constructed in a way that the coordinates of some points are in some special positions, i.e., the framework is non-generic. It is an interesting open problem to construct generic frameworks with singularity degree more than 1.   Although our method is focused on EDM completion problem, it can be adjusted to be analyze other similar SDP problems. 

\bibliographystyle{siam}

\printindex
\addcontentsline{toc}{section}{Index}

\cleardoublepage
\bibliography{reference}

\begin{thebibliography}{10}

\bibitem{alfakih2010universal}
{\sc A.~Y. Alfakih}, {\em On the universal rigidity of generic bar frameworks},
  Contributions to Discrete Mathematics, 5 (2010).

\bibitem{MR3887551}
{\sc A.~Y. Alfakih}, {\em Euclidean distance matrices and their applications in
  rigidity theory}, Springer, Cham, 2018.

\bibitem{bakonyi1995euclidian}
{\sc M.~Bakonyi and C.~R. Johnson}, {\em The euclidian distance matrix
  completion problem}, SIAM Journal on Matrix Analysis and Applications, 16
  (1995), pp.~646--654.

\bibitem{bernstein2021maximum}
{\sc D.~I. Bernstein, S.~Dewar, S.~J. Gortler, A.~Nixon, M.~Sitharam, and
  L.~Theran}, {\em Maximum likelihood thresholds via graph rigidity}, arXiv
  preprint arXiv:2108.02185,  (2021).

\bibitem{borwein1981regularizing}
{\sc J.~Borwein and H.~Wolkowicz}, {\em Regularizing the abstract convex
  program}, Journal of Mathematical Analysis and Applications, 83 (1981),
  pp.~495--530.

\bibitem{connelly2002tensegrity}
{\sc R.~Connelly}, {\em Tensegrity structures: why are they stable?}, in
  Rigidity theory and applications, Springer, 2002, pp.~47--54.

\bibitem{connelly2015iterative}
{\sc R.~Connelly and S.~J. Gortler}, {\em Iterative universal rigidity},
  Discrete \& Computational Geometry, 53 (2015), pp.~847--877.

\bibitem{connelly_guest_2022}
{\sc R.~Connelly and S.~D. Guest}, {\em Frameworks, Tensegrities, and
  Symmetry}, Cambridge University Press, 2022.

\bibitem{DrPaWo:14}
{\sc D.~Drusvyatskiy, G.~Pataki, and H.~Wolkowicz}, {\em Coordinate shadows of
  semidefinite and euclidean distance matrices}, SIAM Journal on Optimization,
  25 (2015), pp.~1160--1178.

\bibitem{DrusWolk:16}
{\sc D.~Drusvyatskiy and H.~Wolkowicz}, {\em The many faces of degeneracy in
  conic optimization}, Foundations and
  Trends\textsuperscript{\tiny\textregistered} in Optimization, 3 (2017),
  pp.~77--170.

\bibitem{gluck1975almost}
{\sc H.~Gluck}, {\em Almost all simply connected closed surfaces are rigid}, in
  Geometric topology, Springer, 1975, pp.~225--239.

\bibitem{gortler2014generic}
{\sc S.~J. Gortler and D.~P. Thurston}, {\em Characterizing the universal
  rigidity of generic frameworks}, Discrete \& Computational Geometry, 51
  (2014), pp.~1017--1036.

\bibitem{hiriart2004}
{\sc J.-B. Hiriart-Urruty and C.~Lemar{\'e}chal}, {\em Fundamentals of convex
  analysis}, Springer Science \& Business Media, 2004.

\bibitem{laman1970graphs}
{\sc G.~Laman}, {\em On graphs and rigidity of plane skeletal structures},
  Journal of Engineering mathematics, 4 (1970), pp.~331--340.

\bibitem{lourencco2018facial}
{\sc B.~F. Louren{\c{c}}o, M.~Muramatsu, and T.~Tsuchiya}, {\em Facial
  reduction and partial polyhedrality}, SIAM Journal on Optimization, 28
  (2018), pp.~2304--2326.

\bibitem{malic2021combinatorial}
{\sc G.~Mali{\'c} and I.~Streinu}, {\em Combinatorial resultants in the
  algebraic rigidity matroid}, arXiv preprint arXiv:2103.08432,  (2021).

\bibitem{Pataki:07}
{\sc G.~Pataki}, {\em On the closedness of the linear image of a closed convex
  cone}, Math. Oper. Res., 32 (2007), pp.~395--412.

\bibitem{pataki2013strong}
{\sc G.~Pataki}, {\em Strong duality in conic linear programming: facial
  reduction and extended duals}, in Computational and analytical mathematics,
  Springer, 2013, pp.~613--634.

\bibitem{pollaczek1927}
{\sc H.~Pollaczek-Geiringer}, {\em {\"U}ber die gliederung ebener fachwerke},
  ZAMM-Journal of Applied Mathematics and Mechanics/Zeitschrift f{\"u}r
  Angewandte Mathematik und Mechanik, 7 (1927), pp.~58--72.

\bibitem{ramana1997exact}
{\sc M.~V. Ramana}, {\em An exact duality theory for semidefinite programming
  and its complexity implications}, Mathematical Programming, 77 (1997),
  pp.~129--162.

\bibitem{ramana1997strong}
{\sc M.~V. Ramana, L.~Tun{\c{c}}el, and H.~Wolkowicz}, {\em Strong duality for
  semidefinite programming}, SIAM Journal on Optimization, 7 (1997),
  pp.~641--662.

\bibitem{sremac2021error}
{\sc S.~Sremac, H.~J. Woerdeman, and H.~Wolkowicz}, {\em Error bounds and
  singularity degree in semidefinite programming}, SIAM Journal on
  Optimization, 31 (2021), pp.~812--836.

\bibitem{sturm2000error}
{\sc J.~F. Sturm}, {\em Error bounds for linear matrix inequalities}, SIAM
  Journal on Optimization, 10 (2000), pp.~1228--1248.

\bibitem{tanigawa2017singularity}
{\sc S.~Tanigawa}, {\em Singularity degree of the positive semidefinite matrix
  completion problem}, SIAM Journal on Optimization, 27 (2017), pp.~986--1009.

\end{thebibliography}
\addcontentsline{toc}{section}{Bibliography}

\end{document}